\numberwithin{equation}{section}
\newtheorem{theorem}{Theorem}[section]
\newtheorem{corollary}[theorem]{Corollary}
\newtheorem{lemma}[theorem]{Lemma}
\newtheorem{proposition}[theorem]{Proposition}
\newtheorem{remark}[theorem]{Remark}
\newtheorem{example}[theorem]{Example}
\newtheorem{definition}[theorem]{Definition}
\newproof{proof}{Proof}
\begin{document}

\begin{frontmatter}

\title{A Note on $*$-Clean Rings}

\author[1]{Jian Cui}
\ead{cui368@mail.ahnu.edu.cn}
\author[2]{Zhou Wang}
\ead{zhouwang@seu.edu.cn}
\address[1]{Department of Mathematics, Anhui Normal
University, Wuhu 241000, China}
\address[2]{Department of Mathematics,
Southeast University, Nanjing 210096, China}

\begin{abstract}
 A $*$-ring $R$
is called (strongly) $*$-clean if every element of $R$ is the sum of
a projection and a unit (which commute with each other). In this
note, some properties of $*$-clean rings are considered. In
particular, a new class of $*$-clean rings which called strongly
$\pi$-$*$-regular are introduced. It is shown that $R$ is strongly
$\pi$-$*$-regular if and only if $R$ is $\pi$-regular and every
idempotent of $R$ is a projection if and only if $R/J(R)$ is
strongly regular with $J(R)$ nil, and every idempotent of $R/J(R)$
is lifted to a central projection of $R.$ In addition, the stable
range conditions of $*$-clean rings are discussed, and equivalent
conditions among $*$-rings related to $*$-cleanness are obtained.
\end{abstract}

\begin{keyword}

(Strongly) $*$-clean ring \sep (strongly) clean ring \sep  strongly $\pi$-$*$-regular ring \sep stable range condition.

\MSC[2010] 16W10 \sep 16U99

\end{keyword}

\end{frontmatter}


\section { \bf Introduction}

Rings in which every element is the product of a unit and an
idempotent are said to be \emph{unit regular}. Recall that an
element of a ring $R$ is \emph{clean} if it is the sum of an
idempotent and a unit, and $R$ is \emph{clean} if every element of
$R$ is clean (see \cite{Nicholson77}). Clean rings were introduced
by Nicholson in relation to exchange rings and have been extensively
studied since then. Recently, Wang et al. \cite{Wang11} showed that
unit regular rings have idempotent stable range one (i.e., whenever
$aR+bR=R$ with $a,b\in R$, there exists $e^2=e\in R$ such that $a+be
\in U(R)$, written ${\rm isr}(R)=1$ for short), and rings with ${\rm
isr(R)}=1$ are clean. In 1999, Nicholson \cite{Nicholson99} called
an element of a ring $R$ \emph{strongly clean} if it is the sum of a
unit and an idempotent that commute with each other, and $R$ is
\emph{strongly clean} if each of its elements is strongly clean.
Clearly, a strongly clean ring is clean, and the converse holds for
an abelian ring (that is, all idempotents in the ring are central).
Local rings and strongly $\pi$-regular rings are well-known examples
of strongly clean rings.

A ring $R$ is a \emph{$*$-ring} (or \emph{ring with involution}) if
there exists
an operation $*:R\rightarrow R$ such that for all $x,~y\in R$\\
$ \indent\indent\indent\indent\indent\indent(x+y)^*=x^*+y^*, \ \
(xy)^*=y^*x^*, \ \ ~{\rm and} ~(x^*)^*=x. $\\ An element $p$ of a
$*$-ring is a \emph{projection} if $p^2=p=p^*$. Obviously, $0$ and
$1$ are projections of any $*$-ring. A $*$-ring $R$ is
\emph{$*$-regular} \cite{Ber72} if for every $x$ in $R$ there exists
a projection $p$ such that $xR=pR.$  Following Va$\rm{\check{s}}$
\cite{Va}, an element of a $*$-ring $R$ is (\emph{strongly})
\emph{$*$-clean} if it can be expressed as the sum of a unit and a
projection (that commute), and $R$ is \emph{$($strongly$)$
$*$-clean} if all of its elements are (strongly) $*$-clean. Clearly,
$*$-clean rings are clean and strongly $*$-clean rings are strongly
clean. It was shown in \cite{ChenCui,LiZ} that there exists a clean
$*$-ring but not $*$-clean, and unit regular $*$-regular rings
(which called \emph{$*$-unit regular} rings in \cite{ChenCui}) need
not be strongly $*$-clean, which answered two questions raised by
Va$\rm{\check{s}}$ in \cite{Va}.

In this note, we continue the study of (strongly) $*$-clean rings.
In Section $2$, several basic properties of (strongly) $*$-clean
rings are investigated. Motivated by the close relationship between
strong $\pi$-regularity and strong cleanness, we introduce the
concept of strongly $\pi$-$*$-regular rings in Section $3$. The
structure of strongly $\pi$-$*$-regular rings is considered and some
properties of extensions are discussed. As we know, it is still an
open question that whether a strongly clean ring has idempotent
stable range one, or even has stable range one (see
\cite{Nicholson99}). In Section $4$, we extend ${\rm isr}(R)=1$ to
the $*$-version. We call a $*$-ring $R$ have \emph{projection stable
range one} (written ${\rm psr}(R)=1$) if, for any $a,b\in R,$
$aR+bR=R$ implies that $a+bp$ is a unit of $R$ for some projection
$p\in R$. It is shown that if $R$ is strongly $*$-clean then ${\rm
psr}(R)=1$, and if ${\rm psr}(R)=1$ then $R$ is $*$-clean.
Furthermore, several equivalent conditions among (strongly) clean
rings, (strongly) $*$-clean rings and $*$-rings with projection
(idempotent) stable range one are obtained.

Throughout this paper, rings are associative with unity. Let $R$ be
a ring. The set of all idempotents, all nilpotents and all units of
$R$ are denoted by $Id(R)$, $R^{\rm nil}$ and $U(R)$, respectively.
For $a\in R,$ the commutant of $a$ is denoted by ${\rm
comm}(a)=\{x\in R:ax=xa\}$. We write $M_n(R)$ for the ring of all
$n\times n$ matrices over $R$ whose identity element we write as
$I_n$. Let $\mathbb{Z}_n$ be the ring of integers modulo $n.$ For a
$*$-ring $R$, the symbol $P(R)$ stands for the set of all
projections of $R$.
\smallskip

\section { \bf $*$-Clean Rings}
In this section, some basic properties of $*$-clean rings are
discussed, and several examples related to $*$-cleanness are given.

\begin{example}\label{1}
$(1)$ Units, elements in $J(R)$ and nilpotents of a $*$-ring $R$ are
$*$-clean.\\
$(2)$ Idempotents of a $*$-regular rings are $*$-clean.
\end{example}

\begin{proof}
$(1)$ It is obvious.

$(2)$ Let $R$ be $*$-regular and $e\in Id(R)$. Then there exists a
projection $p$ such that $(1-e)R=pR.$ So we have $1-e=p(1-e)$ and
$p=(1-e)p,$ and hence $ep=0.$ Note that
$(e-p)(e-p)=e-ep-pe+p=e+p(1-e)=e+(1-e)=1.$ So $e-p \in U(R)$, and
$e=p+(e-p)$ is $*$-clean in $R.$
\end{proof}

By Example \ref{1}, every local ring with involution $*$ is
$*$-clean. In \cite{Va}, Va${\rm \check{s}}$ asked whether there is
an example of a $*$-ring that is clean but not $*$-clean. It was
answered affirmatively in \cite{ChenCui} and \cite{LiZ}. In fact,
one can construct some counterexamples based on the following.

\begin{example}\label{2014-1}
Let $R$ be a boolean $*$-ring. Then $R$ is $*$-clean if and only if
$*=1_R$ is the identity map of $R.$ In particular,
$R=\mathbb{Z}_2\oplus \mathbb{Z}_2$ with $(a,b)^*=(b,a)$ is clean
but not $*$-clean.
\end{example}

\begin{proof}
Note that every boolean ring is clean. Suppose that $R$ is
$*$-clean. Given any $a\in R.$ Then $-a=p+u=p+1=p-1$ for some $p\in
P(R).$ So we have $a=1-p \in P(R).$ Thus, $a^*=a$, which implies
$*=1_R$. Conversely, if $*=1_R,$ then every idempotent of $R$ is a
projection. Thus, $R$ is $*$-clean.
\end{proof}

\begin{lemma}\label{2014-2}
Let $R$ be a $*$-ring. If $2\in U(R)$, then for any $u^2=1,~u^*=u\in
R$ if and only if every idempotent of $R$ is a projection.
\end{lemma}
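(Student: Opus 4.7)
The plan is to exploit the classical bijection between idempotents of $R$ and elements squaring to $1$ that is available whenever $2\in U(R)$, namely $e\mapsto 1-2e$ with inverse $u\mapsto (1+u)/2$. Under this correspondence, the condition $e^*=e$ on the idempotent side translates directly to $u^*=u$ on the square-root-of-unity side (and vice versa), so the equivalence reduces to a one-line verification in each direction.

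For the direction assuming every idempotent is a projection, I would take any $u\in R$ with $u^2=1$ and set $e=(1+u)/2$. A quick expansion using $u^2=1$ shows $e^2=e$, so by hypothesis $e^*=e$; taking adjoints in $2e=1+u$ and cancelling the $1$ (using $2\in U(R)$) yields $u^*=u$.

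For the converse, given an idempotent $e\in R$, I would consider $u=1-2e$. Expanding $u^2$ and using $e^2=e$ gives $u^2=1$, so by hypothesis $u^*=u$. Taking adjoints in $u=1-2e$ produces $2e^*=2e$, and since $2\in U(R)$ I can cancel to conclude $e^*=e$, i.e., $e\in P(R)$.

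There is no real obstacle here; the only point worth flagging is that both halves of the argument genuinely require $2\in U(R)$ (once to form $e=(1+u)/2$, and once to cancel the factor of $2$ in $2e^*=2e$), so the hypothesis cannot be dropped. The whole proof is essentially two symmetric three-line computations built around the idempotent/involution correspondence.
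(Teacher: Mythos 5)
Your proof is correct and uses exactly the same correspondence as the paper's: $e\mapsto 1-2e$ in one direction and $u\mapsto \tfrac{1+u}{2}$ in the other, with $2\in U(R)$ invoked at the same two points. No substantive difference.
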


\begin{proof}
$(\Rightarrow).$ Let $e\in Id(R).$ Then $(1-2e)^2=1.$ So we have
$2e=2e^*,$ and thus $2(e-e^*)=0.$ Since $2\in U(R),$ $e=e^*.$ As
desired.

$(\Leftarrow).$ Given $u\in R$ with $u^2=1.$ Then $\frac{u+1}{2}\in
Id(R)$ since $(\frac{u+1}{2})^2=\frac{u^2+2u+1}{4}=\frac{u+1}{2}$.
Since every idempotent of $R$ is a projection, it follows from
$(\frac{u+1}{2})^*=\frac{u+1}{2}$ that $u^*=u.$
\end{proof}

The $*$-ring $R=\mathbb{Z}_2\oplus \mathbb{Z}_2$ in Example
\ref{2014-1} reveals that ``$2\in U(R)$" in Lemma \ref{2014-2}
cannot be removed.

\begin{corollary}\label{cui11}
Let $R$ be a $*$-ring with $2\in U(R)$. The following are equivalent$:$\\
$(1)$ $R$ is clean and every unit of $R$ is self-adjoint $($i.e., $u^*=u$ for every $u\in U(R))$.\\
$(2)$ $R$ is $*$-clean and $*=1_R$.
\end{corollary}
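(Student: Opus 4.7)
The plan is to treat the two implications separately, with $(2)\Rightarrow(1)$ being essentially free and $(1)\Rightarrow(2)$ being the substantive direction that rides on Lemma~\ref{2014-2}.

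For $(2)\Rightarrow(1)$, I would simply observe that if $*=1_R$, then $u^*=u$ for every element, hence certainly for every unit; and $*$-clean trivially implies clean (projections are idempotents). This should fit in one sentence.

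For $(1)\Rightarrow(2)$, the key observation is that the hypothesis ``every unit is self-adjoint'' is \emph{a priori} stronger than the hypothesis of Lemma~\ref{2014-2}, because any $u$ with $u^2=1$ automatically lies in $U(R)$. So I can invoke Lemma~\ref{2014-2} (using $2\in U(R)$) to conclude that every idempotent of $R$ is a projection. Once that is in hand, the rest is a short computation: given $a\in R$, clean\-ness of $R$ gives $a=e+u$ with $e\in Id(R)$ and $u\in U(R)$; by what was just shown, $e\in P(R)$, so the decomposition already exhibits $a$ as $*$-clean. For the involution being trivial, apply $*$ to both sides: $a^*=e^*+u^*=e+u=a$ since $e$ is a projection and $u$ is self-adjoint by assumption. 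As $a$ was arbitrary, $*=1_R$.

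The main (and only) conceptual step is noticing that one gets the hypothesis of Lemma~\ref{2014-2} for free, so there is no real obstacle; the corollary is essentially a packaging of that lemma together with the clean decomposition. The only thing to keep in mind is that the conclusion $*=1_R$ needs the self-adjointness of both summands, which is precisely why the statement requires ``every unit is self-adjoint'' rather than just the weaker involutive property on square roots of $1$.
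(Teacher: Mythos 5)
Your proof is correct and follows essentially the same route as the paper: both deduce from Lemma~\ref{2014-2} (via the hypothesis that units, in particular square roots of $1$ such as $1-2e$, are self-adjoint) that idempotents are projections, and then read off $*$-cleanness and $a^*=a$ from the clean decomposition $a=e+u$. No gaps.
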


\begin{proof}
$(2)\Rightarrow(1)$ is trivial.

$(1)\Rightarrow(2)$. Let $a\in R.$ Then $a=e+u$ for some $e\in
Id(R)$ and $u \in U(R)$. Note that $(1-2e)^2=1.$  By Lemma
\ref{2014-2}, $e^*=e.$ Thus $a\in R$ is $*$-clean and $a^*=a$, and
so $*=1_R$.
\end{proof}

Recall that an element $t$ of a $*$-ring $R$ is \emph{self-adjoint
square root of $1$} if $t^2=1$ and $t^*=t$.

\begin{theorem}\label{9}
Let $R$ be a $*$-ring, the following are equivalent$:$\\
$(1)$ $R$ is $*$-clean and $2\in U(R)$.\\
$(2)$ Every element of $R$ is a sum of a unit and a self-adjoint
square root of $1$.
\end{theorem}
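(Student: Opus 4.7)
The plan is to exploit the bijection $p \leftrightarrow 2p-1$ between projections and self-adjoint square roots of $1$ (valid when $2$ is invertible), which should make both implications essentially formal.

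For (1) $\Rightarrow$ (2), since $2 \in U(R)$ I may apply $*$-cleanness to the element $(a+1)/2$ for any $a \in R$, writing $(a+1)/2 = p + v$ with $p$ a projection and $v \in U(R)$. Rearranging gives $a = (2p - 1) + 2v$. A direct computation shows $(2p-1)^2 = 4p^2 - 4p + 1 = 1$ and $(2p-1)^* = 2p-1$, so $2p-1$ is a self-adjoint square root of $1$, while $2v \in U(R)$. This is the easy direction.

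For (2) $\Rightarrow$ (1), the first task is to deduce that $2 \in U(R)$, which is the one nontrivial point of the proof. The plan is to apply the hypothesis to the specific element $1 \in R$: write $1 = u + t$ with $u \in U(R)$ and $t^2 = 1 = t^*$. Then $u = 1-t$, and I would multiply by $1+t$ on the right to get $u(1+t) = (1-t)(1+t) = 1 - t^2 = 0$; since $u$ is a unit, this forces $t = -1$ and hence $u = 2$, so $2 \in U(R)$. This is the step I would identify as the main (very mild) obstacle, since the rest of the argument is purely algebraic rearrangement once $2^{-1}$ is available.

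With $2 \in U(R)$ in hand, I would take an arbitrary $a \in R$ and apply (2) to $2a - 1$, writing $2a - 1 = u + t$ with $u$ a unit and $t$ a self-adjoint square root of $1$. Setting $p = (t+1)/2$, a short computation gives $p^2 = (t^2 + 2t + 1)/4 = (1 + t)/2 = p$ and $p^* = p$, so $p$ is a projection. Dividing the equation $2a = u + (t+1)$ by $2$ yields $a = u/2 + p$, a sum of a unit and a projection, proving $R$ is $*$-clean. No obstacle is expected here beyond verifying that the two $2$'s inverses are legitimate.
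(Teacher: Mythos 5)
Your proposal is correct and follows essentially the same route as the paper: both directions use the bijection $p\leftrightarrow 2p-1$ applied to $(1+a)/2$ and $2a-1$ respectively, and the invertibility of $2$ is extracted by applying hypothesis (2) to the element $1$ (the paper deduces $v^2=2v$ hence $v=2$, while you get $t=-1$ hence $u=2$ --- the same computation in slightly different clothing).
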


\begin{proof}
$(1)\Rightarrow(2)$. Let $a\in R.$ Then $\frac{1+a}{2}=p+u$ for some
$p\in P(R)$ and $u\in U(R).$ It follows that $a=(2p-1)+2u$ where
$(2p-1)^*=2p-1,~(2p-1)^2=1$ and $2u\in U(R).$

$(2)\Rightarrow(1)$. We first show that $2\in U(R)$. By hypothesis,
$1=x+v$ with $x^2=1$ and $v\in U(R).$ So we have $(1-v)^2=x^2=1,$
which implies that $v^2=2v.$ Since $v$ is a unit, $v=2 \in U(R).$
Given any $a\in R,$ then there exist $y,w\in R$ satisfying
$2a-1=y+w$ with $y^*=y,~y^2=1$ and $w\in U(R)$. Thus,
$a=\frac{y+1}{2}+\frac{w}{2}$ is a $*$-clean expression since
$(\frac{y+1}{2})^*=\frac{y+1}{2},(\frac{y+1}{2})^2=\frac{y+1}{2}$
and $\frac{w}{2} \in U(R)$.
\end{proof}

Camillo and Yu \cite{Camillo} showed that if $R$ is a ring in which
$2$ is a unit, then $R$ is clean if and only if every element of $R$
is the sum of a unit and a square root of $1.$ Indeed, by the proof
of Theorem \ref{9}, the condition $2\in U(R)$ is also necessary.

\begin{proposition}\label{11}
The following are equivalent for a $*$-ring $R:$\\
$(1)$ $R$ is $*$-clean and $0, 1$ are the only projections.\\
$(2)$ $R$ is clean ring and $0, 1$ are the only idempotents.\\
$(3)$ $R$ is a local ring.
\end{proposition}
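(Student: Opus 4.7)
The plan is to prove a cycle $(3)\Rightarrow(1)\Rightarrow(2)\Rightarrow(3)$, since the characterization of local rings as those rings where for every $a\in R$ either $a$ or $1-a$ is a unit will make the last arrow almost automatic, and the first arrow follows from standard facts about local rings. The real content is the middle arrow $(1)\Rightarrow(2)$, where one must upgrade "only trivial projections" to "only trivial idempotents".

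For $(3)\Rightarrow(1)$, I would use that a local ring has only the idempotents $0$ and $1$, so in particular only the projections $0$ and $1$. For $*$-cleanness, split an arbitrary $a\in R$ by cases: if $a\in U(R)$ write $a=0+a$; otherwise $a$ is a non-unit of a local ring, so $1-a\in U(R)$ and hence $a-1\in U(R)$, giving the $*$-clean expression $a=1+(a-1)$ with projection $1$ and unit $a-1$.

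For $(1)\Rightarrow(2)$, $R$ is clean since $*$-clean rings are clean. Given $e\in Id(R)$, write $e=p+u$ with $p\in P(R)$ and $u\in U(R)$; by hypothesis $p\in\{0,1\}$. If $p=0$ then $e=u$ is simultaneously a unit and an idempotent, forcing $e=1$. If $p=1$ then $u=e-1$, and squaring the relation $e=1+u$ gives $u^2+u=0$, i.e.\ $u(u+1)=0$; since $u$ is a unit this forces $u=-1$ and hence $e=0$. This is the main technical step, but it is short once one notices to expand $(1+u)^2$.

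For $(2)\Rightarrow(3)$, I would invoke the standard criterion that $R$ is local if and only if for every $a\in R$, either $a$ or $1-a$ is a unit. Given $a\in R$, clean-decompose $a=e+u$; by (2), $e\in\{0,1\}$. If $e=0$ then $a=u\in U(R)$, and if $e=1$ then $1-a=-u\in U(R)$. Either way the criterion is satisfied, so $R$ is local. The main obstacle, as noted, is the implication $(1)\Rightarrow(2)$, but it reduces to the elementary identity $u(u+1)=0$ forcing $u=-1$; no commutativity or involution compatibility is needed beyond what the hypotheses already give.
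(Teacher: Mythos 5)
Your proof is correct and takes essentially the same route as the paper: the same cycle of implications, with the key step $(1)\Rightarrow(2)$ handled by the identical case analysis on $p\in\{0,1\}$ (in the $p=1$ case the paper simply notes that the idempotent $1-e=-u$ is a unit, hence $e=0$, which is the same observation as your computation $u(u+1)=0$). The only other difference is cosmetic: for $(2)\Rightarrow(3)$ the paper cites a lemma of Nicholson and Zhou, while you argue directly via the criterion that $R$ is local iff $a$ or $1-a$ is a unit for every $a$; both are valid.
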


\begin{proof}
$(2)\Rightarrow(3)$ follows from \cite[Lemma 14]{Nich04} and
$(3)\Rightarrow(1)$ follows by Example \ref{1}.

$(1)\Rightarrow(2).$ It suffices to show that the only idempotents
in $R$ are $0$ and $1$. For $e^2=e\in R,$ the hypothesis implies
that $e=p+u$ where $p\in P(R)=\{0,1\}$ and $u\in U(R).$ If $p=0$
then $e=u$ is a unit, so $e=1.$ If $p=1$ then $1-e=-u\in U(R),$ and
hence $e=0.$ As required.
\end{proof}

Let $I$ be an ideal of a $*$-ring $R$. We call $I$ is
\emph{$*$-invariant} if $I^*\subseteq I$. In this case, the
involution $*$ of $R$ can be extended to the factor ring $R/I$,
which is still denoted by $*$.

\begin{lemma}\label{21}
Let $R$ be $*$-clean. If $I$ is a $*$-invariant ideal of $R$, then
$R/I$ is $*$-clean. In particular, $R/J(R)$ is $*$-clean.
\end{lemma}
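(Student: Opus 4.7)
The plan is essentially a transport-of-structure argument: take any $*$-clean decomposition in $R$ and push it down to $R/I$. Given $\bar a \in R/I$, lift to $a \in R$ and write $a = p + u$ with $p \in P(R)$ and $u \in U(R)$ by $*$-cleanness of $R$. Then $\bar a = \bar p + \bar u$ in $R/I$, and I would need to verify two things: that $\bar p$ is a projection in $R/I$, and that $\bar u$ is a unit in $R/I$.

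For the first, note that $p^2 = p$ gives $\bar p^{\,2} = \bar p$ immediately, while $p^* = p$ gives $\bar p^{\,*} = \bar p$ using the induced involution on $R/I$; the fact that this induced involution is well defined is precisely the assumption that $I$ is $*$-invariant (so that $*$ descends to a map $R/I \to R/I$ satisfying the involution axioms). For the second, $u \in U(R)$ gives $\bar u \in U(R/I)$ via the ring homomorphism $R \to R/I$.

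For the ``in particular'' clause, I need to check that $J(R)$ is $*$-invariant. The quickest route is to observe that $*$ is an anti-automorphism of $R$, and the Jacobson radical is preserved by any ring anti-automorphism because it admits characterizations that are left-right symmetric (for instance, $J(R)$ equals the set of $x \in R$ such that $1 - rxs \in U(R)$ for all $r, s \in R$, and this condition is manifestly invariant under $*$). Once $J(R)^* \subseteq J(R)$ is established, the first part applies directly.

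I do not expect any serious obstacle here; the only step requiring a small remark is the $*$-invariance of $J(R)$, and even that is standard. The lemma is really just the observation that projections and units push forward along $*$-homomorphisms.
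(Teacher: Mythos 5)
Your proposal is correct and matches the paper's argument: the paper likewise pushes projections and units forward along the quotient map and then verifies $J(R)^*\subseteq J(R)$ by noting that $1-a^*x=(1-x^*a)^*$ is a unit whenever $a\in J(R)$, which is just a one-sided version of the symmetric characterization you invoke. No gaps.
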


\begin{proof}
Since the homomorphism image of a projection (resp., unit) is also a
projection (resp., unit), the result follows.

Next we only need to prove that $J(R)$ is $*$-invariant. For any
$a^* \in (J(R))^*$, we show that $a^*\in J(R).$ Note that $a\in
J(R)$. Take any $x\in R.$ Then $1-x^*a\in U(R)$. Thus
$1-a^*x=(1-x^*a)^* $ is a unit of $R,$ as desired.
\end{proof}

Let $R$ be a $*$-ring. Then $*$ induces an involution of the power
series ring $R[[x]]$, denoted by $*$, where $(\sum_{i=0}^\infty
a_ix^i)^*=\sum_{i=0}^\infty a_i^*x^i$.

\begin{proposition}\label{22}
Let $R$ be a $*$-ring. Then $R[[x]]$ is $*$-clean if and only if $R$
is $*$-clean.
\end{proposition}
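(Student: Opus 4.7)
The plan is to prove the two directions separately, using the observation that the ideal $(x)\subseteq R[[x]]$ is $*$-invariant (since $x^*=x$ by the definition of the induced involution) and that units in $R[[x]]$ are characterized by having unit constant term.

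For the ``only if'' direction, I would use Lemma \ref{21}. Since $(x)^*\subseteq (x)$, the ideal $(x)$ is $*$-invariant in $R[[x]]$. The natural isomorphism $R[[x]]/(x)\cong R$ is $*$-preserving, because the induced involution on the quotient sends the class of $\sum a_i x^i$ to the class of $\sum a_i^* x^i$, which corresponds to $a_0\mapsto a_0^*$ under the isomorphism. Thus if $R[[x]]$ is $*$-clean, so is $R$.

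For the ``if'' direction, take $f(x)=\sum_{i\ge 0}a_i x^i\in R[[x]]$. By hypothesis, $a_0=p+u$ for some $p\in P(R)$ and $u\in U(R)$. Setting $g(x)=u+\sum_{i\ge 1}a_i x^i$, I claim $f(x)=p+g(x)$ is a $*$-clean decomposition in $R[[x]]$. First, $p$ viewed as a constant series still satisfies $p^2=p$ and $p^*=p$, so $p\in P(R[[x]])$. Second, $g(x)$ has constant term $u\in U(R)$, and by the standard power-series inversion argument (inverting $u(1+u^{-1}\sum_{i\ge 1}a_i x^i)$ via the geometric series), $g(x)\in U(R[[x]])$.

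There is essentially no obstacle here: the content reduces to the two elementary facts that projections in $R$ remain projections in $R[[x]]$ (because $x$ is central and $*$-fixed) and that a power series is a unit precisely when its constant term is a unit. The proof is therefore expected to be short and routine; the only point worth writing out explicitly is the verification that $(x)$ is $*$-invariant so that Lemma \ref{21} applies in the forward direction.
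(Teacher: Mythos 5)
Your proposal is correct and follows essentially the same route as the paper: the forward direction via Lemma \ref{21} applied to the $*$-invariant ideal $(x)$ with $R\cong R[[x]]/(x)$, and the reverse direction via the decomposition $f(x)=p+\bigl(u+\sum_{i\geq 1}a_ix^i\bigr)$ using that a power series with unit constant term is a unit. Your write-up merely spells out a few verifications the paper leaves implicit.
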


\begin{proof}
Suppose that $R[[x]]$ is $*$-clean. Note that $R\cong R[[x]]/(x)$
and $(x)$ is a $*$-invariant ideal of $R[[x]]$. By Lemma \ref{21},
$R$ is $*$-clean. Conversely, assume that $R$ is $*$-clean. Let
$f(x)=\sum_{i=0}^\infty a_ix^i \in R[[x]].$ Write $a_0=p+u$ with
$p\in P(R)$ and $u\in U(R)$. Then $f(x)=p+(u+\sum_{i=1}^\infty
a_ix^i)$, where $p\in P(R)\subseteq P(R[[x]])$ and
$u+\sum_{i=1}^\infty a_ix^i\in U(R[[x]])$. Hence $f(x)$ is $*$-clean
in $R[[x]]$.
\end{proof}

According to \cite[Proposition 13]{Nich04}, the polynomial ring
$R[x]$ is never clean. Hence, $R[x]$ is not $*$-clean for any
involution $*$.

\medskip
\section { \bf Strongly $\pi$-$*$-Regular Rings}

Strong $\pi$-regularity is closely related to strong cleanness. In
this section, we introduce the notion of strongly $\pi$-$*$-regular
rings which can be viewed as $*$-versions of strongly $\pi$-regular
rings. The structure and properties of strongly $\pi$-$*$-regular
rings are given.

\begin{lemma}\cite[Lemma 2.1]{LiZ}\label{20140}
Let $R$ be a $*$-ring. If every idempotent of $R$ is a projection,
then $R$ is abelian.
\end{lemma}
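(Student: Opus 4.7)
The goal is to show that every idempotent $e \in R$ commutes with every $r \in R$. My plan is to manufacture a new idempotent from $e$ and $r$, exploit the self-adjointness hypothesis to get a left-right identity, and then kill both sides by multiplying by $e$.

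First, I would fix $e \in Id(R)$ and $r \in R$, and observe that the element $f := e + er(1-e)$ is idempotent. This is a direct verification: since $(1-e)e = 0$, the cross terms in $f^{2}$ vanish, leaving $f^{2} = e + er(1-e) = f$. By hypothesis $f$ is a projection, so $f = f^{*}$. Using $e^{*} = e$ (because $e$ is itself a projection by hypothesis) and $(1-e)^{*} = 1-e$, expanding $f^{*} = f$ yields the key relation
\begin{equation*}
er(1-e) \;=\; (1-e)r^{*}e.
\end{equation*}

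The clever step is to multiply this identity on the left by $e$. The left-hand side becomes $e\cdot er(1-e) = er(1-e)$ (using $e^{2}=e$), while the right-hand side becomes $e(1-e)r^{*}e = 0$. Hence $er(1-e) = 0$, i.e., $er = ere$, for every $r \in R$. Running the same argument with the idempotent $g := e + (1-e)re$ (which is idempotent for the symmetric reason that $e(1-e)=0$), the hypothesis $g = g^{*}$ gives $(1-e)re = er^{*}(1-e)$; multiplying on the right by $e$ now forces $(1-e)re = 0$, i.e., $re = ere$. Combining, $er = ere = re$ for all $r$, so $e$ is central and $R$ is abelian.

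The only nontrivial piece is spotting the two auxiliary idempotents $e + er(1-e)$ and $e + (1-e)re$; everything else is a short computation using $e^{*}=e$ and the trick of multiplying by $e$ on the appropriate side to annihilate one of the sides of the derived identity. No further ring-theoretic input is needed, so I expect no real obstacle once the auxiliary idempotents are introduced.
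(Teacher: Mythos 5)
Your proof is correct, and it is essentially the standard argument for this fact (the paper itself gives no proof, citing \cite[Lemma 2.1]{LiZ}, whose proof uses exactly these auxiliary idempotents $e+er(1-e)$). One small economy: having shown $er(1-e)=0$ for every idempotent $e$ and every $r$, you can apply that conclusion to the idempotent $1-e$ to get $(1-e)re=0$ directly, dispensing with the second idempotent $g$.
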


Due to \cite{ChenCui}, an element $a$ of a $*$-ring $R$ is
\emph{strongly $*$-regular} if $a=pu=up$ with $p\in P(R)$ and $u\in
U(R);$ $R$ is \emph{strongly $*$-regular} if each of its elements is
strongly $*$-regular. By \cite[Proposition 2.8]{ChenCui}, any
strongly $*$-regular element is strongly $*$-clean.

\begin{theorem}\label{20141}
Let $R$ be a $*$-ring. Then the following are equivalent for $a\in
R:$\\
$(1)$ There exist $e\in P(R),~u\in U(R)$ and an integer $m\geq 1$
such that $a^m=eu$ and $a,e,u$ commute with each other.\\
$(2)$ There exist $f\in P(R),~v\in U(R)$ such that $a=f+v,$ $fv=vf$
and $af \in R^{\rm nil}.$\\
$(3)$ There exists $p\in P(R)$ such that $p\in {\rm comm}(a)$,
$ap\in U(pRp)$ and $a(1-p)\in R^{\rm nil}.$\\
$(4)$ There exists $b\in {\rm comm}(a)$ such that $(ab)^*=ab,~b=bab$
and $a-a^2b \in R^{\rm nil}.$
\end{theorem}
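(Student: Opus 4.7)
The plan is to establish $(1)\Rightarrow(2)\Rightarrow(3)\Rightarrow(1)$ together with the separate equivalence $(3)\Leftrightarrow(4)$. The arguments parallel Nicholson's classical characterization of strongly $\pi$-regular (equivalently, Drazin invertible) elements; the extra ingredient is to verify at each transition that the distinguished idempotent produced is genuinely a projection, which follows because it arises as either $1-e$, $p$, $ap$ with self-adjoint $p$, or the element $ab$ of condition (4).

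For $(1)\Rightarrow(2)$ I would take $f:=1-e$ and $v:=a-f=a+e-1$. The projection property of $f$ is inherited from $e$, and $fv=vf$ is immediate from $[e,a]=0$. The identity $a^m(1-e)=eu(1-e)=0$ gives $(af)^m=0$, so $af\in R^{\rm nil}$. The bulk of the work is to see $v\in U(R)$, and for this I would use the Pierce decomposition along $e$: on $eRe$ the restriction of $v$ is $ae$, which is a unit because $(ae)^m=eu$ is inverted in $eRe$ by $eu^{-1}$; on $(1-e)R(1-e)$ the restriction is $-(1-e)+a(1-e)$, the negative of the identity of the corner plus a nilpotent, hence again a unit. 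Step $(2)\Rightarrow(3)$ is then easy: set $p:=1-f$, observe that $a$ commutes with $f$ (because $a=f+v$ with $fv=vf$) and hence with $p$, and check that $a(1-p)=af\in R^{\rm nil}$ while $ap=vp$ is inverted in $pRp$ by $v^{-1}p$. Step $(3)\Rightarrow(1)$ is finished by choosing $m$ with $(a(1-p))^m=0$; since $p$ commutes with $a$, the two summands of $a=ap+a(1-p)$ live in orthogonal corners, so $a^m=(ap)^m\in pRp$, and taking $e:=p$, $u:=a^m+(1-p)$ satisfies (1), with $u\in U(R)$ by a second Pierce argument.

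The involution is used decisively only in $(3)\Leftrightarrow(4)$. For $(3)\Rightarrow(4)$, let $c\in pRp$ be the inverse of $ap$ and set $b:=c$; then $ab=ba=p$, which immediately yields $(ab)^*=p=ab$, $bab=cp=c=b$, and $a-a^2b=a(1-p)\in R^{\rm nil}$. For $(4)\Rightarrow(3)$, set $p:=ab$; the relations $p^2=a(bab)=ab=p$ and $p^*=p$ make $p$ a projection, commutativity $ab=ba$ forces $ap=pa$, the membership $b\in pRp$ comes from $b=bab=bp=pb$, and using $abab=a^2b^2=(ab)^2=p$ one checks that $b$ is the inverse of $ap$ inside $pRp$; finally $a(1-p)=a-a^2b\in R^{\rm nil}$.

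The main obstacle I expect is the two Pierce decomposition calculations that exhibit $v$ in $(1)\Rightarrow(2)$ and $u$ in $(3)\Rightarrow(1)$ as units of the whole ring $R$ and not merely of a corner; once these are in place, the remaining verifications amount to careful bookkeeping of commutativity and self-adjointness, with each newly produced projection inheriting $*=*$ from the projection hypothesis of the preceding condition.
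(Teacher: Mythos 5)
Your proposal is correct and follows essentially the same route as the paper: the same constructions appear at every stage ($f=1-e$ and $v=a-f$ for $(1)\Rightarrow(2)$, $p=1-f$ for $(2)\Rightarrow(3)$, the group-inverse element $b$ with $ab=ba=p$ linking $(3)$ and $(4)$, and $u=a^m+(1-e)$ to return to $(1)$). The only differences are cosmetic --- you verify unit-hood via Pierce corner decompositions where the paper writes explicit inverses or telescoping products, and you organize the implications as a $3$-cycle plus the separate equivalence $(3)\Leftrightarrow(4)$ rather than the paper's $4$-cycle --- and all of your verifications check out.
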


\begin{proof}
$(1)\Rightarrow(2).$ Write $f=1-e.$ Clearly, $f\in P(R)$ and
$a^m-f\in U(R)$ with the inverse $u^{-1}e-f.$ From $af=fa,$ we have
$a-f:=v$ is a unit of $R$ (since
$(a-f)(a^{m-1}+a^{m-2}f+\cdots+af+f)=a^m-f \in U(R)$) and $fv=vf$.
It is clear that $(af)^m=a^mf=0.$

$(2)\Rightarrow(3).$ Set $p=1-f$. Then $p\in P(R),$ $ap=pa=vp\in
U(pRp)$ and $a(1-p)=af \in R^{\rm nil}.$

$(3)\Rightarrow(4)$. By $(3)$, $aw=wa=p$ for some $w\in U(pRp)$. So
we obtain $[a-(1-p)][w-(1-p)]=1-a(1-p)\in U(R)$ since $a(1-p)$ is
nilpotent, which implies that $a-(1-p)\in U(R).$ Let
$b=[a-(1-p)]^{-1}p.$ Then $b\in {\rm comm}(a),$ $bp=b$ and
$ab=[a-(1-p)]b=p\in P(R)$. Thus $(ab)^*=ab,~b=bp=bab$ and
$a-a^2b=a(1-ab)=a(1-p) \in R^{\rm nil}$.

$(4)\Rightarrow(1)$. Let $e=ab$. Then $(ab)^*=ab$ implies $e^*=e,$
and $bab=b$ yields $e^2=e$. So $e\in P(R).$ As $a-a^2b\in R^{\rm
nil},$ $a^m=a^me$ for some integer $m\geq 1.$ Take $u=a^m+(1-e)$ and
$u^{\prime}=b^me+(1-e).$ Then $uu^{\prime}=u^{\prime}u=1$. Hence,
$u\in U(R)$ and $a^m=a^me=ue$ with $a,e,u$ commuting with each
other.
\end{proof}

Recall that an element $a$ of a ring $R$ is \emph{strongly
$\pi$-regular} if $a^n \in a^{n+1}R \cap Ra^{n+1}$ for some $n \geq
1$ (equivalently, $a^n=eu$ with $e\in Id(R)$, $u\in U(R)$ and
$a,e,u$ all commute \cite{Nicholson99}); $R$ is \emph{strongly
$\pi$-regular} if every element of $R$ is strongly $\pi$-regular.
Based on the above, we introduce the following concept.

\begin{definition}
Let $R$ be a $*$-ring. An element $a\in R$ is called strongly
$\pi$-$*$-regular if it satisfies the conditions in Theorem
\ref{20141}; $R$ is called strongly $\pi$-$*$-regular if every
element of $R$ is strongly $\pi$-$*$-regular.
\end{definition}

\begin{corollary}\label{20142}
Any strongly $*$-regular element is strongly $\pi$-$*$-regular, and
any strongly $\pi$-$*$-regular element is strongly $*$-clean.
\end{corollary}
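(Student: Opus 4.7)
The plan is to derive both implications directly from the equivalent characterizations of strongly $\pi$-$*$-regular elements provided by Theorem \ref{20141}, so that the corollary reduces to verifying the hypotheses of condition (1) and reading off condition (2) respectively.

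For the first implication, suppose $a \in R$ is strongly $*$-regular, so $a = pu = up$ for some $p \in P(R)$ and $u \in U(R)$. I would verify condition (1) of Theorem \ref{20141} with $m = 1$, $e = p$, and the same $u$. The only thing to check is that $a$, $p$, $u$ pairwise commute: the identity $pu = up$ gives commutativity of $p$ and $u$ immediately, and it also yields $ap = upp = up = a = pu = ppu = pa$ and similarly $au = puu = u^2 p$ together with $ua = u \cdot up = u^2 p$, so $au = ua$. Then $a = a^1 = pu$ has the required form and condition (1) is satisfied.

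For the second implication, suppose $a \in R$ is strongly $\pi$-$*$-regular. I would invoke condition (2) of Theorem \ref{20141}: there exist $f \in P(R)$ and $v \in U(R)$ with $a = f + v$ and $fv = vf$. This is literally the definition of a strongly $*$-clean element (sum of a projection and a unit that commute), so $a$ is strongly $*$-clean; the extra nilpotency information $af \in R^{\rm nil}$ is not needed here.

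There is no genuine obstacle: both halves are essentially packaging statements, since Theorem \ref{20141} has already done the work by showing that the $\pi$-$*$-regular condition can be read both as a polynomial-level $*$-regularity ($a^m = eu$ with commuting $a, e, u$) and as a strong $*$-clean decomposition with a nilpotent twist ($a = f + v$ with $af$ nilpotent). The only minor bookkeeping is the commutativity verification in the first implication, which follows routinely from $pu = up$.
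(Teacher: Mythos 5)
Your proof is correct and matches the paper's (implicit) argument: the paper states this corollary without proof precisely because, as you observe, the first implication is condition (1) of Theorem \ref{20141} with $m=1$ after the routine commutativity check, and the second is read directly off condition (2).
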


\begin{example}
$(1)$ Let $R=\mathbb{Z}_4$ and $*=1_R$.
Then $R$ is strongly $\pi$-$*$-regular. However, $2\in R$ is not strongly $*$-regular.\\
$(2)$ Let $R$ be a local domain with involution $*$ and $J(R)\neq
0$. Note that $P(R)=Id(R)=\{0,1\}$. So $R$ is strongly $*$-clean by
Proposition \ref{11}, but any power of a nonzero element in $J(R)$
can not expressed as the product of a projection and a unit.
\end{example}

Recall that a ring $R$ is \emph{$\pi$-regular} if for any  $a\in R$,
there exist $n\geq 1$ and $b\in R$ such that $a^n=a^nba^n.$ Strongly
$\pi$-regular rings and regular rings are $\pi$-regular (see
\cite{Nicholson99}). A ring $R$ is \emph{directly finite} if $ab=1$
implies $ba=1$ for all $a,b\in R.$ Abelian rings are directly
finite.

\begin{theorem}\label{20143}
The following are equivalent for a $*$-ring $R:$\\
$(1)$ $R$ is strongly $\pi$-$*$-regular.\\
$(2)$ $R$ is $\pi$-regular and every idempotent of $R$ is a
projection.\\
$(3)$ For any $a\in R,$ there exist $n\geq 1$ and $p\in P(R)$ such
that $a^nR=pR$, and $R$ is abelian.\\
$(4)$ For any $a\in R,$ there exist $n\geq 1$ such that
$a^n$ is strongly $*$-regular.\\
$(5)$ For any $a\in R$, there exist $p\in P(R)$ and $u\in U(R)$ such
that $a=p+u$, $ap\in R^{\rm nil}$; and $v^{-1}qv$ is a projection
for all $v\in U(R)$ and all $q\in P(R).$
\end{theorem}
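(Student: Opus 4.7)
The plan is to circulate (1) $\Rightarrow$ (2) $\Rightarrow$ (3) $\Rightarrow$ (4) $\Rightarrow$ (1) and then establish (1) $\Leftrightarrow$ (5) separately. The first cycle is largely mechanical, resting on Theorem \ref{20141} and Lemma \ref{20140}; the real work is in (5) $\Rightarrow$ (1), since the decomposition in (5) is seemingly weaker than Theorem \ref{20141}(2) because no commutation of $p$ and $u$ is assumed, and the conjugation hypothesis has to recover it.

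For (1) $\Rightarrow$ (2): Theorem \ref{20141}(1) writes $a^m = eu$ with $a, e, u$ pairwise commuting, so $a^m u^{-1} a^m = a^m$ and $R$ is $\pi$-regular; Theorem \ref{20141}(3) applied to an idempotent $e$ yields $p \in P(R) \cap {\rm comm}(e)$ with $e(1-p)$ both nilpotent and idempotent, hence zero, so $e$ is an idempotent unit of $pRp$, forcing $e = p$. For (2) $\Rightarrow$ (3): Lemma \ref{20140} gives abelianness, and from $a^n = a^n b a^n$ the idempotent $a^n b$ spans the same principal right ideal as $a^n$ and is a projection by assumption. For (3) $\Rightarrow$ (4): centrality of the projection $p$ (from abelianness) lets me verify that $a^n + (1-p)$ is a unit with inverse $sp + (1-p)$, where $p = a^n s$, producing $a^n = pu = up$. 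For (4) $\Rightarrow$ (1): applied to an idempotent $e$, $e = e^n = pu = up$ together with $(pu)^2 = pu$ yields $pup = p$; combining with $pu = up$ gives $p = pup = up^2 = up = pu = e$, so every idempotent is a projection. Lemma \ref{20140} then forces $R$ abelian, and for general $a$ the projection $p$ with $a^n = pu = up$ is central, so $a$ commutes with both $p$ and $u$, matching Theorem \ref{20141}(1).

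The implication (1) $\Rightarrow$ (5) is routine: Theorem \ref{20141}(2) supplies the decomposition, and since (1) $\Rightarrow$ (2) forces every idempotent to be a projection, any $v^{-1} q v$ is an idempotent and hence automatically a projection. The main obstacle is (5) $\Rightarrow$ (1); my decisive move is to show that every idempotent of $R$ is already a projection. Given $e \in Id(R)$, invoke (5) to write $e = p + u$ with $p \in P(R)$, $u \in U(R)$, $ep \in R^{\rm nil}$. Expanding $e^2 = e$ yields $pu + up + u^2 = u$, so $pu + up = u - u^2$; right-multiplying by $u^{-1}$ gives
\[
p + u p u^{-1} = 1 - u.
\]
Setting $p' := u p u^{-1}$, which is a projection by the conjugation hypothesis in (5), the identity rearranges to $u = 1 - p - p'$, whence
\[
e = p + u = 1 - p' \in P(R).
\]
Lemma \ref{20140} now promotes $R$ to being abelian, so for an arbitrary $a$ the decomposition $a = p + u$ from (5) automatically satisfies $pu = up$, matching Theorem \ref{20141}(2) and proving $a$ strongly $\pi$-$*$-regular. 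The delicate point is the identity $e = 1 - p'$, which neatly packages the two projections produced by the conjugation condition into a single projection equal to $e$.
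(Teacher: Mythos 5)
Your proposal is essentially correct, and its best part takes a genuinely different route from the paper. The cycle $(1)\Rightarrow(2)\Rightarrow(3)\Rightarrow(4)$ matches the paper's, except that in $(1)\Rightarrow(2)$ you extract ``every idempotent is a projection'' directly from Theorem~\ref{20141}(3) (the commuting product $e(1-p)$ is a nilpotent idempotent, hence zero, so $e=ep$ is an idempotent unit of $pRp$ and equals $p$), where the paper simply cites \cite[Theorem 2.2]{LiZ}; both work. The real divergence is at the end: the paper closes the loop as $(4)\Rightarrow(5)\Rightarrow(1)$, again invoking \cite[Theorem 2.2]{LiZ} to get abelianness from $(5)$, whereas you prove $(5)\Rightarrow(1)$ from scratch: from $e^2=e$ you derive $p+upu^{-1}=1-u$, hence $u=1-p-upu^{-1}$ and $e=1-upu^{-1}$, a projection by the conjugation hypothesis. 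This is self-contained, does not even use the nilpotency of $ep$, and is the most valuable piece of the write-up.

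Two repairs are needed. First, in $(4)\Rightarrow(1)$ you assert that since $p$ is central, ``$a$ commutes with both $p$ and $u$.'' Centrality of $p$ gives $ap=pa$ but says nothing about the particular unit $u$ in $a^n=pu=up$: on the corner $(1-p)R(1-p)$ that unit is unconstrained and need not commute with $a$. The standard fix is to replace $u$ by $u'=a^n+(1-p)$, which is a unit with inverse $u^{-1}p+(1-p)$, commutes with $a$ because $a^n$ and $p$ do, and still satisfies $a^n=u'p=pu'$; then Theorem~\ref{20141}(1) applies. Second, in $(3)\Rightarrow(4)$ your candidate inverse $sp+(1-p)$ is readily checked to be a right inverse of $a^n+(1-p)$ (using $a^ns=p$ and $a^n(1-p)=0$), but the left-inverse computation would need $sa^np=p$, which is not given; you must add, as the paper does explicitly, that an abelian ring is directly finite, so right invertibility suffices. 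Neither issue affects the architecture of your proof.
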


\begin{proof}
$(1)\Rightarrow(2)$. Note that every strongly $\pi$-$*$-regular ring
is strongly $\pi$-regular and strongly $*$-clean. Thus $R$ is a
$\pi$-regular ring. By \cite[Theorem 2.2]{LiZ}, every idempotent of
$R$ is a projection.

$(2)\Rightarrow(3)$. For any $a\in R$, there exists $n\geq 1$ such
that $a^n=a^nxa^n$ for some $x\in R$. Write $a^nx=p$. Then $p\in
P(R)$ and $a^n=pa^n$. It is clear that $a^nR=pR.$ In view of Lemma
\ref{20140}, $R$ is abelian.

$(3)\Rightarrow(4)$. Let $e\in Id(R).$ Then $eR=pR$ for some $p\in
P(R).$ Since $R$ is abelian, we have $e=pe=ep=p.$ Thus, every
idempotent of $R$ is a projection. Given $a\in R,$ there exists
$n\geq 1$ and $q\in P(R)$ such that $a^nR=qR.$ So one gets
$a^n=qa^n$ and $q=a^nx$ for some $x\in R$, which implies
$a^n=a^nxa^n$. Next we show that $a^n-(1-q)$ is invertible. Note
that $[a^n-(1-q)][xq-(1-q)]=1.$ Then $a^n-(1-q):=u\in U(R)$ since
$R$ is directly finite. Multiplying the equation $a^n-(1-q)=u$ by
$p$ yields $a^n=a^nq=uq=qu,$ which implies that $a^n$ is strongly
$*$-regular.

$(4)\Rightarrow(5)$. For $e\in Id(R),$ $e=qv=vq$ for some $q\in
P(R)$ and $v\in U(R)$ by the assumption. Then $e=qv=e^2=qv^2$, and
so we obtain $q=qv=e,$ which implies that every idempotent of $R$ is
a projection. Clearly, $v^{-1}qv$ is a projection for all $v\in
U(R)$ and all $q\in P(R).$ Given $a\in R$ as in $(4),$
$a^n=(1-p)w=w(1-p)$ for some $p\in P(R)$ and $w\in U(R).$ Note that
$R$ is abelian. So we have $a^np=(ap)^n=0$ and
$(a-p)[a^{n-1}w^{-1}(1-p)-\sum_{i=0}^na^ip]=1,$ and hence $a-p \in
U(R)$ as $R$ is directly finite.

$(5)\Rightarrow(1)$. By $(5)$, every element of $R$ is $*$-clean. In
view of \cite[Theorem 2.2]{LiZ}, $R$ is abelian. Thus $R$ is
strongly $\pi$-$*$-regular by Theorem \ref{20141}(2).
\end{proof}

\begin{corollary}\label{20144}
Let $R$ be a $*$-ring. The following are equivalent$:$\\
$(1)$ $R$ is strongly $\pi$-$*$-regular.\\
$(2)$ $R/J(R)$ is strongly $\pi$-$*$-regular with $J(R)$ nil, every
projection of $R$ is central and every projection of $R/J(R)$ is
lifted to
a projection of $R$.\\
$(3)$ $R/J(R)$ is strongly $*$-regular with $J(R)$ nil, and every
idempotent of $R/J(R)$ is lifted to a central projection of $R.$
\end{corollary}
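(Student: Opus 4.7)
The plan is to run the cycle $(1)\Rightarrow(2)\Rightarrow(3)\Rightarrow(1)$, using two background facts throughout: the Jacobson radical $J(R)$ is $*$-invariant in any $*$-ring (the argument at the end of the proof of Lemma \ref{21}), so the involution descends to $R/J(R)$; and idempotents lift modulo any nil ideal. For $(1)\Rightarrow(2)$, Theorem \ref{20143} gives that $R$ is $\pi$-regular with every idempotent a projection, and Lemma \ref{20140} then forces $R$ to be abelian, so all projections of $R$ are central. The standard two-line argument shows $J(R)$ is nil in any strongly $\pi$-regular ring: if $x\in J(R)$ and $x^n=x^{n+1}y$ with $y\in\mathrm{comm}(x)$, then $xy\in J(R)$ makes $1-xy$ a unit, so $x^n=0$. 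Idempotents then lift from $R/J(R)$ to $R$ through the nil ideal $J(R)$, and the lifts, being idempotents of $R$, are projections. Strong $\pi$-$*$-regularity passes to $R/J(R)$ by applying Theorem \ref{20141}(1) element-wise and reducing.

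For $(2)\Rightarrow(3)$, set $\bar R=R/J(R)$; by hypothesis $\bar R$ is strongly $\pi$-$*$-regular, hence abelian, strongly $\pi$-regular, with $J(\bar R)=0$ and every idempotent a projection. The decisive step, and the main obstacle, is to show that $\bar R$ is \emph{reduced}: once this holds, strong $\pi$-regularity combined with reducedness yields strong regularity in the usual way (from $a^n=eu$ with $a,e,u$ pairwise commuting, $((1-e)a)^n=(1-e)a^n=0$ forces $(1-e)a=0$ in a reduced ring, and then $a\in a^2R$ follows by a short calculation), and ``every idempotent is a projection'' upgrades strong regularity to strong $*$-regularity. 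I will prove reducedness through the subdirect-product decomposition $\bar R\hookrightarrow\prod_{P}\bar R/P$ over primitive ideals (available because $J(\bar R)=0$). Each $\bar R/P$ is primitive, abelian, and strongly $\pi$-regular; being both abelian and primitive forces the only idempotents to be $0,1$, so the strong-$\pi$-regular dichotomy $a^n=eu$ with $e\in\{0,1\}$ partitions elements into nilpotent or right-invertible; a nonzero nilpotent would annihilate the faithful simple module and contradict primitivity, while right-invertibility upgrades to invertibility (any $a$ with $a^n$ a unit is itself a unit). So $\bar R/P$ is a division ring and $\bar R$ is reduced. The lifting clause of (3) is then immediate from (2): every idempotent of $\bar R$ is a projection, and (2) supplies a central projection lift in $R$.

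For $(3)\Rightarrow(1)$, I first derive (2). Strong $*$-regularity trivially implies strong $\pi$-$*$-regularity, and projections of $\bar R$ are idempotents, so (3) directly provides their central-projection lifts in $R$. To obtain ``every projection of $R$ is central,'' take a projection $p\in R$ and lift $\bar p$ via (3) to a central projection $q\in R$; centrality of $q$ makes $p,q$ commute, so the standard identity for a difference of commuting idempotents gives $(p-q)^3=p-q$, i.e., $(p-q)\bigl(1-(p-q)^2\bigr)=0$; since $p-q\in J(R)$, $1-(p-q)^2$ is a unit, forcing $p=q$, which is central. With (2) in hand, for each $a\in R$ write $\bar a^n=\bar e\bar u$ in $\bar R$ with $\bar e$ a central projection and $\bar u$ a unit (all pairwise commuting), and lift $\bar e$ via (2) to a central projection $e\in R$. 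Then $(1-e)a^n\in J(R)$ is nilpotent, so by centrality of $e$, $(1-e)a$ is nilpotent; and $ea^n$ is a unit of $eR$ modulo the nil ideal $eJ(R)$, hence itself a unit of $eR$, which forces $ea$ to be a unit of $eR$. Theorem \ref{20141}(3) now certifies $a$ as strongly $\pi$-$*$-regular.
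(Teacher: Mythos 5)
Your overall architecture is sound and genuinely different from the paper's: where the paper outsources the two hard points to \cite[Corollary 2.11]{LiZ} (reducedness of $R/J(R)$ and the lifting of idempotents to central projections) and to \cite[Theorem 4]{Badawi} (passing $\pi$-regularity up through a nil radical), you prove reducedness from scratch via the subdirect embedding of $\bar R$ into its primitive quotients, and in $(3)\Rightarrow(1)$ you build the strongly $\pi$-$*$-regular decomposition of each element by hand through Theorem \ref{20141}(3). Your $(1)\Rightarrow(2)$ and $(3)\Rightarrow(1)$ are correct as written; in particular the $(p-q)^3=p-q$ trick for commuting idempotents differing by a radical element, and the unit-lifting of $ea^n$ in $eRe$ modulo the nil ideal $eJ(R)$, both check out.

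There is, however, one step in your reducedness argument that fails as stated: ``a nonzero nilpotent would annihilate the faithful simple module and contradict primitivity.'' A single nilpotent element does not annihilate a faithful simple module, and primitive rings are full of nilpotents ($M_n(k)$ is primitive). What actually closes the argument is this: once $\bar R/P$ has only the trivial idempotents, the dichotomy $a^n=eu$ with $e\in\{0,1\}$ makes every element either nilpotent or invertible; in such a ring the set $N$ of nilpotents is an ideal (any $ra$ or $ar$ with $a$ nilpotent is a non-unit, hence nilpotent; and if $a+b$ were a unit with $a,b\in N$ one reduces to $a+b=1$, whence $b=1-a$ is a unit, a contradiction), so $N$ is a nil ideal, $N\subseteq J(\bar R/P)=0$ by primitivity, and $\bar R/P$ is a division ring. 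A second, smaller soft spot: you assert each $\bar R/P$ is abelian without justification. Quotients of abelian rings need not be abelian in general; here you need that $\bar R$ is strongly $\pi$-regular, hence exchange, so idempotents of $\bar R/P$ lift to (central) idempotents of $\bar R$ and are therefore central in the quotient. Both repairs are standard, but as written the proof of the key reducedness claim does not go through.
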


\begin{proof}
Write $\overline{R}=R/J(R)$. By Lemma \ref{21}, $\overline{R}$ is a
$*$-ring.

$(1)\Rightarrow(2)$. Clearly, $\overline{R}$ is strongly
$\pi$-$*$-regular. As $R$ is strongly $\pi$-regular, for any $a\in
J(R),$ there exist $m\geq 1,$ $e\in Id(R)$ and $u\in U(R)$ such that
$e=a^mu \in J(R).$ So $a^m=eu^{-1}=0,$ which implies that $J(R)$ is
nil. Note that $R$ is strongly $*$-clean. So the rest follows from
\cite[Corollary 2.11]{LiZ}.

$(2)\Rightarrow(3)$. By virtue of \cite[Corollar 2.11]{LiZ},
$\overline{R}$ is reduced (i.e., $\overline{R}^{\rm nil}=0$), and
every idempotent of $\overline{R}$ is lifted to a central projection
of $R.$ So we only need to prove that $\overline{R}$ is strongly
$*$-regular. Given any $x\in \overline{R}.$ By Theorem \ref{20141},
there exist $p\in P(\overline{R})$ and $v\in U(\overline{R})$ such
that $a=p+v$, $vp=pv$ and $ap\in \overline{R}^{\rm nil}=0.$ It
follows that $a=a(1-p)=v(1-p)=(1-p)v$ is strongly $*$-regular in
$\overline{R}.$

$(3)\Rightarrow(1)$. Since $\overline{R}$ is strongly regular, it is
reduced clean. By \cite[Corollary 2.11]{LiZ}, every idempotent of
$R$ is a projection. Note that $J(R)$ is nil and $\overline{R}$ is
$\pi$-regular. So $R$ is $\pi$-regular by \cite[Theorem 4]{Badawi}.
In view of Theorem \ref{20143}, $R$ is strongly $\pi$-$*$-regular.
\end{proof}

\begin{corollary}
Let $R$ be a $*$-ring. Then $R$ is strongly $*$-clean and
$\pi$-regular if and only if $R$ is strongly $\pi$-$*$-regular.
\end{corollary}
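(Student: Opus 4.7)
The plan is to reduce both directions immediately to results already established in this section, so the argument is essentially a short deduction rather than a new construction.

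For the ``only if'' direction, assume $R$ is strongly $\pi$-$*$-regular. Then Corollary \ref{20142} gives that every element of $R$ is strongly $*$-clean, so $R$ is strongly $*$-clean. Moreover, the implication $(1)\Rightarrow(2)$ of Theorem \ref{20143} yields that $R$ is $\pi$-regular (with every idempotent a projection, though only $\pi$-regularity is needed here). This half is therefore immediate.

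For the ``if'' direction, I would invoke the characterization $(1)\Leftrightarrow(2)$ in Theorem \ref{20143}: it suffices to show that $R$ is $\pi$-regular and that every idempotent of $R$ is a projection. The first is given by hypothesis. For the second, I would use the strongly $*$-clean hypothesis together with \cite[Theorem 2.2]{LiZ} (the same reference already invoked in the proof of Theorem \ref{20143}), which asserts that in a strongly $*$-clean ring every idempotent is a projection. Applying Theorem \ref{20143}$(2)\Rightarrow(1)$ then yields that $R$ is strongly $\pi$-$*$-regular.

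There is essentially no obstacle here; the corollary is a packaging statement designed to isolate the fact that, in the presence of $\pi$-regularity, ``strongly $*$-clean'' is exactly the right $*$-refinement to upgrade strong $\pi$-regularity to strong $\pi$-$*$-regularity. The only nontrivial input is the lemma from \cite{LiZ} identifying idempotents with projections in any strongly $*$-clean ring, and once this is in hand the equivalence in Theorem \ref{20143} does all the work.
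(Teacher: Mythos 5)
Your proof is correct and follows essentially the same route as the paper: the forward direction via Corollary \ref{20142} and Theorem \ref{20143}, and the converse by combining $\pi$-regularity with \cite[Theorem 2.2]{LiZ} to show idempotents are projections, then applying Theorem \ref{20143}$(2)\Rightarrow(1)$. The paper's proof is just a terser version of exactly this argument.
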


\begin{proof}
If $R$ is strongly $*$-clean and $\pi$-regular, by \cite[Theorem
2.2]{LiZ}, idempotents of $R$ are projections. So $R$ is strongly
$\pi$-$*$-regular by Theorem \ref{20143}. The other direction is
clear.
\end{proof}

For a $*$-ring $R$, the matrix ring $M_n(R)$ has a natural
involution inherited from $R:$ if $A=(a_{ij})\in M_n(R),$ $A^*$ is
the transpose of $(a_{ij}^*)$ (i.e., $A^*=(a_{ij}^*)^T=(a_{ji}^*)$).
Henceforth we consider $M_n(R)$ as a $*$-ring with respect to this
natural involution.

\begin{corollary}\label{2014-3}
Let $R$ be a $*$-ring. Then $M_n(R)$ is not strongly
$\pi$-$*$-regular for any $n\geq 2.$
\end{corollary}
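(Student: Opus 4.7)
The plan is to proceed by contradiction and exploit the fact, already established in this section, that strong $\pi$-$*$-regularity forces the ring to be abelian. This will immediately conflict with the non-commutative nature of the idempotent structure of $M_n(R)$ for $n \geq 2$.

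First I would recall from Theorem \ref{20143}, part $(2)$, that if $M_n(R)$ were strongly $\pi$-$*$-regular, then every idempotent of $M_n(R)$ would have to be a projection. Then by Lemma \ref{20140}, $M_n(R)$ would have to be abelian, that is, every idempotent in $M_n(R)$ would have to be central.

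Next I would produce an explicit non-central idempotent in $M_n(R)$ for $n \geq 2$. The natural choice is the matrix unit $E_{11}$, which is plainly idempotent, yet satisfies $E_{11} E_{12} = E_{12}$ while $E_{12} E_{11} = 0$. Hence $E_{11}$ is not central in $M_n(R)$, so $M_n(R)$ is not abelian, contradicting what we derived. This shows $M_n(R)$ cannot be strongly $\pi$-$*$-regular.

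There is no real obstacle here; the argument is entirely structural and relies only on results already proved in the paper together with a single explicit idempotent. One small point worth stating carefully is that the conclusion holds regardless of which involution $*$ is placed on $R$ (and hence the induced transpose-type involution on $M_n(R)$), since the non-centrality of $E_{11}$ is independent of $*$. Thus the argument applies uniformly to every $*$-ring $R$ and every $n \geq 2$.
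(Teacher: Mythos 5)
Your argument is correct and is essentially the proof the paper intends: the corollary is stated without an explicit proof, immediately after Theorem \ref{20143} and Lemma \ref{20140} establish that a strongly $\pi$-$*$-regular ring is abelian, and the non-central idempotent $E_{11}$ in $M_n(R)$ for $n\geq 2$ gives exactly the intended contradiction. No gaps; your remark that the conclusion is independent of the choice of involution is a sensible clarification.
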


Let $R$ be a $*$-ring and $S=pRp$ with $p\in P(R).$ Then the
restriction of $*$ on $S$ will be an involution of $S$, which is
also denoted by $*.$

\begin{corollary}
If $R$ is strongly $\pi$-$*$-regular, then so is $eRe$ for any $e\in
Id(R)$.
\end{corollary}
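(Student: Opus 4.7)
The plan is to reduce everything to Theorem \ref{20143}, which characterizes strong $\pi$-$*$-regularity as the conjunction of $\pi$-regularity and ``every idempotent is a projection.'' So the corollary should follow once we check that both conditions are inherited by the corner $eRe$, together with the preliminary matter of making sense of the involution on $eRe$.

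First I would verify that $eRe$ is genuinely a $*$-subring. Since $R$ is strongly $\pi$-$*$-regular, Theorem \ref{20143} gives that every element of $Id(R)$ is a projection; in particular $e=e^*$. Then, as noted in the paragraph preceding the corollary, $*$ restricts to an involution on $eRe$, so it makes sense to ask whether $eRe$ is strongly $\pi$-$*$-regular.

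Next I would check the two hypotheses of Theorem \ref{20143} for $eRe$. For $\pi$-regularity: given $a\in eRe$, $\pi$-regularity of $R$ yields $n\ge 1$ and $b\in R$ with $a^n=a^nba^n$; setting $b'=ebe\in eRe$ and using $a^n e=a^n=ea^n$ gives $a^nb'a^n=a^nba^n=a^n$, so $eRe$ is $\pi$-regular. For idempotents being projections: any $f\in Id(eRe)$ satisfies $f^2=f$ as an element of $R$, so $f\in Id(R)$, hence $f=f^*$ by hypothesis on $R$; because the involution on $eRe$ is the restriction of that of $R$, $f$ is a projection in $eRe$.

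Putting these together, Theorem \ref{20143} applied inside $eRe$ concludes the proof. No step looks genuinely hard; the only thing to be careful about is the small initial observation that $e$ must be a projection before one can even state what involution $eRe$ carries, which is the reason the corollary can be stated for arbitrary $e\in Id(R)$ and not only $e\in P(R)$.
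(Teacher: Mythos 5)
Your proof is correct and follows essentially the same route as the paper: both first note that $e$ must be a projection so that $eRe$ is a $*$-ring, then verify the two conditions of Theorem \ref{20143} for the corner ring. The only difference is cosmetic --- where the paper cites the known fact that corners of strongly $\pi$-regular rings are strongly $\pi$-regular, you verify $\pi$-regularity of $eRe$ directly via $b'=ebe$, which is a perfectly valid (and more self-contained) way to get the same condition.
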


\begin{proof}
Let $S=eRe$ with $e\in Id(R).$ By hypothesis, $e$ is a projection of
$R$. So $S$ is a $*$-ring. It is well known that $S$ is strongly
$\pi$-regular (see also \cite[Lemma 39]{Bor08}). Clearly, every
idempotent of $S~(\subseteq R)$ is a projection. So the result
follows by Theorem \ref{20143}.
\end{proof}

Let $RG$ be the group ring of a group $G$ over a ring $R$. According
to \cite[Lemma 2.12]{LiZ}, the map $*:~RG\rightarrow RG$ given by
$(\sum_ga_g g)^*=\sum_g a^*_g g^{-1}$ is an involution of $RG$, and
is denoted by $*$ again.

\begin{corollary}
Let $R$ be a $*$-ring with artinian prime factors, $2\in J(R)$ and
$G$ be a locally finite $2$-group. Then $R$ is strongly
$\pi$-$*$-regular if and only if $RG$ is strongly $\pi$-$*$-regular.
\end{corollary}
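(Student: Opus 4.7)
The plan is to reduce everything to Theorem~\ref{20143}, which characterizes strong $\pi$-$*$-regularity as the conjunction of $\pi$-regularity and the condition that every idempotent is a projection. Under the involution $(\sum_g a_g g)^* = \sum_g a_g^* g^{-1}$ on $RG$, both the natural embedding $\iota:R\hookrightarrow RG$ sending $r\mapsto r\cdot e_G$ (where $e_G$ is the identity of $G$) and the augmentation $\epsilon:RG\twoheadrightarrow R$ sending $\sum_g a_g g\mapsto \sum_g a_g$ are $*$-homomorphisms, as a one-line check shows. Consequently $R$ is simultaneously a $*$-subring and a $*$-quotient of $RG$, with kernel the $*$-invariant augmentation ideal $\omega(RG)$, and this double role drives both directions of the equivalence.

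For the direction ``$RG$ strongly $\pi$-$*$-regular $\Rightarrow$ $R$ strongly $\pi$-$*$-regular'', I would transfer $\pi$-regularity through the quotient, noting that $R\cong RG/\omega(RG)$ inherits $\pi$-regularity automatically. For the idempotent condition, any $e\in \mathrm{Id}(R)$ becomes an idempotent of $RG$ under $\iota$, hence a projection of $RG$ by hypothesis; since $\iota$ respects $*$, the equation $e^* = e$ pulls back to $R$. Theorem~\ref{20143} then delivers the conclusion.

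For the substantive direction, assume $R$ is strongly $\pi$-$*$-regular. By Corollary~\ref{20144}, $R/J(R)$ is strongly $*$-regular (hence reduced), $J(R)$ is nil, every idempotent of $R/J(R)$ lifts to a central projection of $R$, and in particular $R$ is abelian and strongly $\pi$-regular. I would then verify the two clauses of Theorem~\ref{20143} for $RG$. Clause one, $\pi$-regularity of $RG$: invoke the standard group-ring lifting that, under exactly these three hypotheses (artinian prime factors, $2\in J(R)$, $G$ a locally finite $2$-group), strong $\pi$-regularity passes from $R$ to $RG$. Clause two, every idempotent of $RG$ is a projection: show that under the same hypotheses $\omega(RG)\subseteq J(RG)$, so $RG/J(RG)$ is a homomorphic image of the reduced ring $R/J(R)$; combined with nilness of $J(R)$ and the central-projection-lifting clause of Corollary~\ref{20144}, every idempotent of $RG$ is forced to lie in $R$, where it is already a projection by the hypothesis on $R$, and hence a projection of $RG$ through the $*$-embedding.

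The hard part will be clause two of the sufficiency direction: establishing $\omega(RG)\subseteq J(RG)$ and that the ensuing lift carries every idempotent of $RG$ down into $R$. The hypothesis ``$2\in J(R)$, $G$ a locally finite $2$-group'' is precisely the classical setting in which the augmentation ideal of each finite $2$-subgroup becomes nil modulo $J(R)\cdot RG$, while the artinian-prime-factor condition secures both that $J(R)$ is nil and that central lifting behaves correctly via Corollary~\ref{20144}. Once these structural ingredients are in place, the two-clause check from Theorem~\ref{20143} closes the argument.
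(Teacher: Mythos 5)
Your proposal is correct and follows essentially the same route as the paper: both directions reduce to Theorem~\ref{20143}, with (strong) $\pi$-regularity transferred between $R$ and $RG$ via the standard group-ring results (the paper cites \cite[Theorem 3.3, Proposition 3.4]{Chin}), and the idempotent condition transferred via the fact that, for $R$ abelian with $2\in J(R)$ and $G$ a locally finite $2$-group, the idempotents of $RG$ coincide with those of $R$. The step you flag as the hard part is exactly this last fact, which the paper simply quotes as \cite[Lemma 11]{CNZ} rather than rederiving it from $\omega(RG)\subseteq J(RG)$ and idempotent lifting as you propose.
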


\begin{proof}
Assume that $R$ is strongly $\pi$-$*$-regular. Then $Id(R)=P(R).$ In
particular, $R$ is abelian. So idempotents of $R$ coincide with
idempotents in $RG$ by \cite[Lemma 11]{CNZ}, and hence every
idempotent of $RG$ is a projection. Since $R$ is a ring with
artinian prime factors and $G$ is a locally finite $2$-group, $RG$
is a strongly $\pi$-regular ring by \cite[Theorem 3.3]{Chin}. In
view of Theorem \ref{20143}, $RG$ is strongly $\pi$-$*$-regular.

Conversely, $R$ is strongly $\pi$-regular by \cite[Proposition
3.4]{Chin}. Note that $Id(R)\subseteq Id(RG)$ and all idempotents of
$RG$ are projections. By Theorem \ref{20143}, $R$ is strongly
$\pi$-$*$-regular.
\end{proof}

Let $\mathbb{C}$ be the complex filed. It is well known that for any
$n\geq 1,$ the matrix ring $M_n(\mathbb{C})$ is strongly
$\pi$-regular. However, $M_n(\mathbb{C})$ is not strongly
$\pi$-$*$-regular whenever $n\geq 2$ by Corollary \ref{2014-3}. So
it is interesting to determine when a matrix of $M_n(\mathbb{C})$ is
strongly $\pi$-$*$-regular. The set of all $n\times 1$ matrices over
$\mathbb{C}$ is denoted by $\mathbb{C}^n.$

\begin{example}
Let $S=M_n(\mathbb{C})$ with $*$ the transpose operation. Then $A$
is strongly $\pi$-$*$-regular if and only if there exist
$e_1,e_2,\ldots,e_n \in \mathbb{C}^n$ such that $e_i^*e_j=0$ for
$i=1,\ldots,r$; $j=r+1,\ldots,n$, and $A=P\left(
\begin{smallmatrix}
C &0\\
0& N
\end{smallmatrix} \right)P^{-1}$ with $P=(e_1,e_2,\ldots,e_n)\in U(S),$ $C\in U(M_r(\mathbb{C}))$ and $N\in [M_{n-r}(\mathbb{C})]^{\rm
nil}$. In particular, any real symmetric matrix is strongly
$\pi$-$*$-regular.
\end{example}

\begin{proof}
Given $A\in S.$ Assume that $rank(A)=r.$ By the Jordan canonical
decomposition, there exists $P=(e_1,e_2,\ldots,e_n)\in U(S)$ such
that $A=P\left(
\begin{smallmatrix}
C &0\\
0& N
\end{smallmatrix} \right)P^{-1}$, where $e_i \in \mathbb{C}^n$ for all $i$, $C\in U(M_r(\mathbb{C}))$ and $N\in [M_{n-r}(\mathbb{C})]^{\rm
nil}$. Write $B=P\left(
\begin{smallmatrix}
C^{-1} &0\\
0& 0
\end{smallmatrix} \right)P^{-1}$. Then one easily gets that
$BA=AB,~B=BAB$ and $A-A^2B=P\left(
\begin{smallmatrix}
0 &0\\
0& N
\end{smallmatrix} \right)P^{-1}$ is nilpotent. Note that $B$ satisfies the above conditions is unique (see \cite{Bhaskara02}).
In view of Theorem \ref{20141}, $A$ is strongly $\pi$-$*$-regular if
and only if $(AB)^*=AB.$ Notice that $AB=P\left(
\begin{smallmatrix}
I_r &0\\
0& 0
\end{smallmatrix} \right)P^{-1}$ and
\begin{equation*}
\begin{split}
& ~~~\ \ ~\ \ (AB)^*=AB\\
&\Leftrightarrow (P^{-1})^*\left(
\begin{smallmatrix}
I_r &0\\
0& 0
\end{smallmatrix} \right)P^*=P\left(
\begin{smallmatrix}
I_r &0\\
0& 0
\end{smallmatrix} \right)P^{-1}\\
& \Leftrightarrow (P^*P)^{-1}\left(
\begin{smallmatrix}
I_r &0\\
0& 0
\end{smallmatrix} \right)(P^*P)=\left(
\begin{smallmatrix}
I_r &0\\
0& 0
\end{smallmatrix} \right)\\
& \Leftrightarrow \left(
\begin{smallmatrix}
I_r &0\\
0& 0
\end{smallmatrix} \right)(P^*P)=(P^*P)\left(
\begin{smallmatrix}
I_r &0\\
0& 0
\end{smallmatrix} \right)\\
& \Leftrightarrow P^*P=\left(
\begin{smallmatrix}
V_1 &0\\
0& V_2
\end{smallmatrix} \right) {\rm with}~V_1\in U(M_r(\mathbb{C}))~ {\rm
and}~V_2\in U(M_{n-r}(\mathbb{C}))\\
& \Leftrightarrow e_i^*e_j=0 {\rm~for ~all~}i\in
\{1,2,\ldots,r\},j\in \{r+1,r+2,\ldots,n\},
\end{split}
\end{equation*}
where $V_1=(e_1^*,e_2^*,\ldots,e_r^*)^T(e_1,e_2,\ldots,e_r)$;
$V_2=(e_{r+1}^*,e_{r+2}^*,\ldots,e_n^*)^T(e_{r+1},e_{r+2},\ldots,e_n).$

If $A\in S$ is a real symmetric matrix, then there exists an
orthogonal matrix $P$ (i.e., $P^{-1}=P^T=P^*$) such that $A=P\left(
\begin{smallmatrix}
I_r &0\\
0& 0
\end{smallmatrix} \right)P^{-1}.$ So the result follows.
\end{proof}

In view of \cite[Proposition 3]{Ber72}, the involution of a
$*$-regular ring $R$ is proper $($i.e., $x^*x=0$ implies that $x=0$
for all $x\in R$$)$.

\begin{remark}
If $R$ is strongly $\pi$-$*$-regular, then for any $x\in R$,
$x^*x=0$ implies $x\in R^{\rm nil}$.  Indeed, by Theorem
\ref{20141}, there exist $p\in P(R)$ and $u\in U(R)$ such that
$x^m=pu=up$ for some $m\geq 1$. Then
$0=(x^*)^mx^m=(x^m)^*x^m=u^*pu$, and thus $p=0,$ whence $x^m=0$.
\end{remark}

\medskip
\section { \bf Stable Range Conditions}

In \cite{Nicholson99}, Nicholson asked whether every strongly clean
ring has stable range one, and it is still open. Recall that a ring
$R$ is said to \emph{have idempotent stable range one} (written
isr$(R)$=1) provided that for any $a,~b\in R$, $aR+bR=R$ implies
that $a+be \in U(R)$ for some $e\in Id(R)$ (see
\cite{Chen99,Wang11}). If $e$ is an arbitrary element of $R$ (not
necessary an idempotent), then $R$ is said to \emph{have stable
range one.} Clearly, if isr$(R)=1$, then $R$ is clean and has stable
range one. We extend the notion of isr$(R)=1$ to $*$-versions.

\begin{definition}\label{41}
A $*$-ring $R$ is said to have projection stable range
one~$($written $\rm{psr} (R)=1$$)$ if for any $a,~b\in R$, $aR+bR=R$
implies there exists $p\in P(R)$ such that $a+bp \in U(R)$.
\end{definition}

The following result is motivated by \cite[Proposition 2]{Chen99}.
\begin{proposition}\label{42}
Let $R$ be a $*$-ring. The following are equivalent$:$\\
$(1)$ $\rm{psr}(R)=1$.\\
$(2)$ For any $a,~b\in R,$ $aR+bR=R$ implies there exists $p\in
P(R)$ such that $a+bp$ is right invertible.\\
$(3)$ For any $a,~b\in R$, $aR+bR=R$ implies there exists $p\in
P(R)$ such that $a+bp$ is left invertible.
\end{proposition}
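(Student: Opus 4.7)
My plan is to close the equivalences via $(1)\Rightarrow(2)$, $(1)\Rightarrow(3)$, $(2)\Rightarrow(1)$, and $(3)\Rightarrow(1)$. The first two are immediate: a unit is simultaneously right and left invertible, so the projection witnessing (1) for a pair $(a,b)$ witnesses (2) and (3) for the same pair.

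For $(2)\Rightarrow(1)$, I would mimic the idempotent construction used in Chen's proof for idempotent stable range one, adapted to projections. Given $aR+bR=R$, apply (2) to obtain $p\in P(R)$ and $c\in R$ with $(a+bp)c=1$; set $v:=a+bp$, so $vc=1$. Form the idempotent $e:=1-cv$; a direct check using $vc=1$ gives $e^2=e$ and $ve=v-vcv=0$. The identity $cv+e=1$ yields $cR+eR=R$, so a second application of (2) to $(c,e)$ produces $p'\in P(R)$ and $d\in R$ with $(c+ep')d=1$. The crucial step is the computation $v(c+ep')=vc+(ve)p'=1$, which uses both $vc=1$ and $ve=0$; this makes $v$ a left inverse and $d$ a right inverse of $c+ep'$, so $c+ep'$ is a unit with $v=d$. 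Then $(c+ep')v=1$, which together with $vc=1$ shows $v=a+bp\in U(R)$.

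For $(3)\Rightarrow(1)$, the argument is shorter and relies on the fact that $b=0$ is admissible in the hypothesis of (3). Given $aR+bR=R$, apply (3) to get $p\in P(R)$ and $c$ with $c(a+bp)=1$; set $v=a+bp$, so $cv=1$, and in particular $1=cv\in cR$, hence $cR=R$. Now apply (3) to the degenerate pair $(c,0)$: the hypothesis $cR+0\cdot R=R$ is automatic, so some projection $p'\in P(R)$ (whose value plays no further role) produces a left inverse of $c+0\cdot p'=c$, i.e.\ $dc=1$ for some $d\in R$. Thus $c$ has a left inverse $d$ and a right inverse $v$, so $c\in U(R)$ with $c^{-1}=d=v$. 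Consequently $v=c^{-1}\in U(R)$, and therefore $a+bp\in U(R)$, giving (1).

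The main conceptual point is the observation driving $(3)\Rightarrow(1)$: once one allows $b=0$, condition (3) is secretly equivalent to the directly-finite-type assertion that every right invertible element of $R$ is a unit, after which (1) follows immediately. It is worth noting that the analogous trick is useless for (2)---plugging $(c,0)$ into (2) only recovers what is assumed---so for $(2)\Rightarrow(1)$ one is forced into the more technical idempotent-plus-auxiliary-projection argument above, whose only potentially tricky step is verifying $v(c+ep')=1$, and this falls out at once from $ve=0$.
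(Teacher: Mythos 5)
Your proof is correct and takes essentially the same approach as the paper: your $(2)\Rightarrow(1)$ via the idempotent $e=1-cv$ and a second application of $(2)$ to the pair $(c,e)$ is exactly the paper's $(2)\Rightarrow(3)$ step (which already concludes $a+bp\in U(R)$ there), and your $(3)\Rightarrow(1)$ via the degenerate pair $(c,0)$ is the paper's argument verbatim. The only difference is bookkeeping---you close the equivalences in a star pattern rather than the cycle $(1)\Rightarrow(2)\Rightarrow(3)\Rightarrow(1)$.
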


\begin{proof} The proof is similar to that of \cite[Proposition
2]{Chen99}.

$(1)\Rightarrow(2)$ is clear.

$(2)\Rightarrow(3)$. Let $a,~b\in R$ with $aR+bR=R.$ Then there is a
projection $p\in R$ such that $a+bp=u$ is right invertible. Assume
that $uw=1$ for some $w\in R.$ Then $wR+(1-wu)R=R.$ So the
hypothesis implies there exists $q\in P(R)$ such that $w+(1-wu)q$ is
right invertible. Note that $u[w+(1-wu)q]=1$. Thus $w+(1-wu)q$ is
also left invertible, and hence invertible. This implies that $u \in
U(R)$.

$(3)\Rightarrow(1).$ Given any $a,b \in R$ with $aR+bR=R.$ Then
there exists $p\in P(R)$ such that $a+bp$ is left invertible. We may
let $v\in R$ with $v(a+bp)=1.$ Then $vR+0R=R.$ By hypothesis, we can
find a projection $q$ such that $v+0q=v$ is left invertible. So $v$
is a unit, which implies that $a+bp \in U(R).$ Therefore, ${\rm
psr}(R)=1.$
\end{proof}

For a $*$-ring $R$, it is clear that if ${\rm psr}(R)=1,$ then ${\rm
isr}(R)=1.$ However, there exists a $*$-ring with ${\rm isr}(R)=1$
but not satisfies ${\rm psr}(R)=1$.

\begin{example}\label{44}
Define the involution of $\mathbb{Z}_2$ by $*:x\mapsto x$. Let
$S=M_2(\mathbb{Z}_2)$. Then $S$ is a $*$-ring. In view of
\cite[Corollary 3.4]{Wang11}, ${\rm isr}(S)=1$ since $S$ is unit
regular. Notice that $P(S)=\{O,I_2,\left(
\begin{smallmatrix}
1 &0 \\
0& 0
\end{smallmatrix} \right),\left(
\begin{smallmatrix}
0 &0 \\
0& 1
\end{smallmatrix} \right)\}$,
and $\left(
\begin{smallmatrix}
1 &0 \\
0& 0
\end{smallmatrix} \right)S+\left(
\begin{smallmatrix}
0 &0 \\
1& 0
\end{smallmatrix} \right)S=S.$ However, $\left(
\begin{smallmatrix}
1 &0 \\
0& 0
\end{smallmatrix} \right)+\left(
\begin{smallmatrix}
0 &0 \\
1& 0
\end{smallmatrix} \right)P$ is not invertible for any $P\in P(S).$
Hence, ${\rm psr}(S)\neq 1.$
\end{example}

From Example \ref{44}, one can also find that the projection stable
range one property cannot be inherited to the matrix ring.

\begin{proposition}\label{43}
Let $R$ be a $*$-ring. If $\rm{psr} (R)=1$, then $R$ is $*$-clean.
\end{proposition}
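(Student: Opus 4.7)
The plan is to apply the projection stable range one condition to a trivial covering relation. Fix an arbitrary $a\in R$. The key observation is that
\begin{equation*}
(-a)R + 1\cdot R = R
\end{equation*}
holds trivially, since $1\cdot R=R$ already exhausts $R$. This is precisely the hypothesis that allows $\mathrm{psr}(R)=1$ to be invoked with the pair $(-a, 1)$ in place of the generic pair $(a,b)$ of Definition \ref{41}.

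Doing so produces a projection $p\in P(R)$ such that $-a+1\cdot p=-a+p\in U(R)$. Setting $u:=-a+p$ and rearranging yields
\begin{equation*}
a = p - u = p + (-u),
\end{equation*}
and since $-u$ is a unit whenever $u$ is, this exhibits $a$ as the sum of a projection and a unit. Because $a$ was arbitrary, $R$ is $*$-clean.

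There is no serious obstacle in this argument; the only thing to notice is that one must feed $\mathrm{psr}$ the element $-a$ rather than $a$ itself. Applying the definition directly to the pair $(a,1)$ would only give $a=u-p$, i.e.\ a unit minus a projection, and $-p$ is not in general a projection. Thus a single sign flip is what converts the stable range relation into a clean decomposition. This is the exact $*$-analogue of the classical one-line argument that $\mathrm{isr}(R)=1$ implies cleanness, and it uses nothing from the paper beyond Definition \ref{41}.
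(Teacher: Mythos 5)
Your proof is correct and is essentially the same one-line argument as the paper's: the paper applies $\mathrm{psr}(R)=1$ to the pair $(a,-1)$, getting $a-p=u\in U(R)$ and hence $a=p+u$ directly, whereas you apply it to $(-a,1)$ and absorb the sign into the unit instead. The two are interchangeable; no gap.
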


\begin{proof}
For any $a\in R,$ the equation $aR+(-1)R=R$ implies that
$a+(-1)p=u\in U(R)$ for some $p\in P(R).$ So $a=p+u,$ and hence $R$
is $*$-clean.
\end{proof}

According to \cite[Proposition 4]{Va}, the ring in Example \ref{44}
is $*$-clean. So we conclude that the converse of Proposition
\ref{43} is not true.

Following Nicholson \cite{Nicholson77}, a ring $R$ is
\emph{exchange} if for every $a\in R,$ there exists $e^2=e \in aR$
such that $1-e\in (1-a)R.$ Clean rings are exchange, the converse
holds whenever the rings are abelian.  A $*$-ring $R$ is called
\emph{$*$-abelian} if every projection of $R$ is central \cite{Va}.

\begin{theorem}\label{cui}
Let $R$ be a $*$-ring. The following are equivalent$:$\\
$(1)$ ${\rm psr}(R)=1$ and $R$ is $*$-abelian.\\
$(2)$ For any $a,~b\in R$, $aR+bR=R$ implies there exists a
projection $p\in {\rm comm}(a)$ such that $a+bp \in U(R)$.\\
$(3)$ ${\rm isr}(R)=1$ and every idempotent of $R$ is a projection.\\
$(4)$ $R$ is clean $($or exchange$)$ and every idempotent of $R$ is a projection.\\
$(5)$ $R$ is $*$-clean and $*$-abelian.\\
$(6)$ $R$ is strongly $*$-clean.\\
$(7)$ For every $a\in R$, there exists a projection $p\in aR$ such
that $1-p\in (1-a)R.$
\end{theorem}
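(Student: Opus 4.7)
The plan is to establish all equivalences through the main cycle $(1)\Rightarrow(2)\Rightarrow(6)\Rightarrow(5)\Rightarrow(4)\Rightarrow(3)\Rightarrow(1)$ together with the side equivalence $(6)\Leftrightarrow(7)$. The implication $(1)\Rightarrow(2)$ is immediate, since $*$-abelianness makes any projection central and hence in ${\rm comm}(a)$. For $(2)\Rightarrow(6)$, given $r\in R$ I would apply $(2)$ to $(a,b)=(-r,1)$ (so trivially $(-r)R+R=R$); the resulting projection $p\in{\rm comm}(r)$ with $-r+p=u\in U(R)$ rearranges to the strongly $*$-clean decomposition $r=p+(-u)$, the commutativity of $p$ with $-u=p-r$ being forced by $p\in{\rm comm}(r)$.

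The step $(6)\Rightarrow(5)$ uses the standard trick: if $e^2=e$ has strongly $*$-clean decomposition $e=p+u$, then $(p+u)^2=p+u$ together with $pu=up$ forces $u(2p+u)=u$, hence $u=1-2p$ and $e=1-p\in P(R)$; Lemma~\ref{20140} then makes $R$ abelian and so $*$-abelian, while $*$-cleanness is inherited. The same trick gives $(5)\Rightarrow(4)$, with $*$-abelianness supplying commutativity of projection and unit in any $*$-clean expression of an idempotent. The implication $(3)\Rightarrow(1)$ is then transparent: the idempotent from ${\rm isr}(R)=1$ is automatically a projection, so ${\rm psr}(R)=1$, and idempotents being projections yields $*$-abelianness via Lemma~\ref{20140}. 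For $(6)\Rightarrow(7)$, writing $a=p+u$ with $pu=up$ and setting $q:=1-p$, one computes $aq=uq$ and $(1-a)p=-pu$, so $q=u^{-1}aq\in aR$ and $1-q=p=-(1-a)pu^{-1}\in(1-a)R$. For $(7)\Rightarrow(6)$, applying (7) to $e^2=e$ produces $p\in eR$ with $ep=p$, and expanding $(1-e)(1-p)=1-p$ gives $ep=e$, so $e=p$ is a projection and $R$ is abelian; then for arbitrary $a\in R$ the central projection $p\in aR$ (with $p=ar$ and $1-p=(1-a)s$) satisfies $par=p$ and $(1-a)(1-p)s=1-p$, giving right invertibility in the Pierce corners $pR$ and $(1-p)R$, upgraded to two-sided invertibility by direct finiteness, so $a-(1-p)\in U(R)$ and $a=(1-p)+(a-(1-p))$ is strongly $*$-clean.

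The main obstacle is $(4)\Rightarrow(3)$: showing that a clean ring in which every idempotent is a projection has idempotent stable range one. Given $ax+by=1$, I would apply exchange to $by$ to obtain a central projection $e$ with $e=bs\in bR$ and $1-e=ar\in aR$; explicit right inverses $(be)(se)=e$ in $eR$ and $(a(1-e))((1-e)r)=1-e$ in $(1-e)R$, combined with direct finiteness of the abelian corners, give $be\in U(eR)$ and $a(1-e)\in U((1-e)R)$. Since $eR$ inherits $*$-cleanness from $R$, the element $-(be)^{-1}ae\in eR$ admits a $*$-clean decomposition producing a projection $p\in P(eR)\subseteq P(R)$ with $(be)^{-1}ae+p\in U(eR)$; then $(a+bp)e=(be)\bigl((be)^{-1}ae+p\bigr)\in U(eR)$ while $(a+bp)(1-e)=a(1-e)\in U((1-e)R)$, so $a+bp\in U(R)$ by Pierce assembly. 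The delicate part is exactly this assembly: one must choose the projection inside the corner $eR$ where $b$ is already invertible, so as not to spoil the other corner $(1-e)R$ where $a$ is already invertible, and simultaneously exploit $*$-cleanness of the corner to extract the projection itself.
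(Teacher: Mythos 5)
Your proposal is correct, and it is genuinely more self-contained than the paper's argument. The paper runs the single cycle $(1)\Rightarrow(2)\Rightarrow(3)\Rightarrow(4)\Rightarrow(5)\Rightarrow(6)\Rightarrow(7)\Rightarrow(1)$ and outsources the two nontrivial legs: $(4)\Rightarrow(5)\Rightarrow(6)\Rightarrow(7)$ is quoted from Li--Zhou \cite[Theorem 2.2]{LiZ}, and the return leg $(7)\Rightarrow(1)$ reduces (after showing $Id(R)=P(R)$, hence abelian via Lemma \ref{20140}) to Chen's theorem that abelian exchange rings have idempotent stable range one \cite[Theorem 12]{Chen99}. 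You instead prove everything from first principles: your $u=1-2p$ computation for an idempotent's (strongly) $*$-clean decomposition replaces the citation of Li--Zhou, and your $(4)\Rightarrow(3)$ is in effect a re-proof of Chen's theorem, refined to produce a projection rather than a mere idempotent --- you apply exchange to $by$ to get a central projection $e$ splitting $R$ into Pierce corners where $be$ and $a(1-e)$ are respectively one-sided invertible (hence invertible by direct finiteness of abelian rings), then use $*$-cleanness of the corner $eR$ to extract the projection $p$ with $(be)^{-1}ae+p\in U(eR)$, and reassemble. Your handling of $(2)$ also differs slightly: the paper goes $(2)\Rightarrow(3)$ by testing $e R+(-1)R=R$ on an idempotent $e$ and deducing $e=1-p$ from $(e-p)(1-e-p)=0$, whereas you go $(2)\Rightarrow(6)$ by testing $(-r)R+1\cdot R=R$ on an arbitrary $r$. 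What the paper's route buys is brevity; what yours buys is independence from the two cited results and an explicit construction of the stabilizing projection, at the cost of the longer Pierce-decomposition argument. All the individual verifications (the commutation $p(-u)=(-u)p$ in $(2)\Rightarrow(6)$, the identities $q=a(u^{-1}q)$ and $p=-(1-a)pu^{-1}$ in $(6)\Rightarrow(7)$, direct finiteness of the abelian corners, and the inheritance of $*$-cleanness by $eR$ for a central projection $e$) check out.
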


\begin{proof}
$(1)\Rightarrow(2)$ and $(3)\Rightarrow(4)$ are clear;
$(4)\Rightarrow(5)\Rightarrow(6)\Rightarrow(7)$ follows from
\cite[Theorem 2.2]{LiZ}.

$(2)\Rightarrow(3)$. We only need to show that all idempotents are
projections. Let $e\in Id(R).$ Then $eR+(-1)R=R.$ So there exists
$p\in P(R)$ such that $ep=pe$ and $e-p\in U(R)$. Note that
$(e-p)(1-e-p)=(1-e-p)(e-p)=0.$ Thus, $e=1-p \in P(R).$ Therefore,
every idempotent of $R$ is a projection.

$(7)\Rightarrow(1)$. Let $e\in Id(R).$ Then there exists a
projection $p\in eR$ such that $1-p\in (1-e)R.$ So we obtain $p=ep$
and $1-p=(1-e)(1-p)$. It follows that $e=p,$ and thus $Id(R)=P(R).$
In view of Lemma \ref{20140}, $R$ is abelian. Note that $R$ is
exchange. Then by \cite[Theorem 12]{Chen99}, ${\rm isr}(R)=1,$ and
hence ${\rm psr}(R)=1.$
\end{proof}

It is still unknown that whether strongly clean rings have stable
range one (\cite{Nicholson99}). However, we have an affirmative
answer of their $*$-versions.

\begin{corollary}\label{38}
If $R$ is a strongly $*$-clean ring, then ${\rm psr}(R)=1.$
\end{corollary}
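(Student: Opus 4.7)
The corollary is an immediate consequence of the preceding Theorem~\ref{cui}, so the proof plan is essentially an invocation. Specifically, condition (6) of Theorem~\ref{cui} is ``$R$ is strongly $*$-clean'' and condition (1) contains ``${\rm psr}(R)=1$'', and the theorem asserts all seven conditions are equivalent. Hence the plan is: assume $R$ is strongly $*$-clean, cite the equivalence (6)$\Leftrightarrow$(1) of Theorem~\ref{cui}, and read off ${\rm psr}(R)=1$.

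If one wished to make the dependence explicit rather than hiding it behind the cyclic chain, the natural route is (6)$\Rightarrow$(7)$\Rightarrow$(1), since these are the two implications in the cycle that directly bracket our hypothesis and conclusion. The implication (6)$\Rightarrow$(7) is supplied by \cite[Theorem 2.2]{LiZ} (a strongly $*$-clean element $a=p+u$ with $pu=up$ and $p\in P(R)$ yields the projection $1-p\in aR$ together with $p\in(1-a)R$). The implication (7)$\Rightarrow$(1) was carried out in Theorem~\ref{cui} by first deducing $Id(R)=P(R)$ from the suitability-type condition (hence $R$ is abelian by Lemma~\ref{20140}, so in particular $*$-abelian), then quoting \cite[Theorem 12]{Chen99} to get ${\rm isr}(R)=1$, and finally upgrading the idempotent witness to a projection (trivially, since $Id(R)=P(R)$) to conclude ${\rm psr}(R)=1$.

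There is no real obstacle here: the content lies entirely in Theorem~\ref{cui}, and the corollary just extracts the piece ``strongly $*$-clean $\Rightarrow$ projection stable range one''. The only thing worth flagging is that the statement omits the ``$*$-abelian'' conclusion of (1) that also comes for free; this is consistent with the paper's focus on the stable range conclusion as the main novelty in the strongly $*$-clean case. The proof can therefore be written in one line.
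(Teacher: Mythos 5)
Your proposal matches the paper exactly: the corollary is stated without proof as an immediate consequence of Theorem~\ref{cui}, extracting the implication $(6)\Rightarrow(1)$, and your explicit tracing of the route $(6)\Rightarrow(7)\Rightarrow(1)$ through the theorem's cyclic proof is accurate (including the observation that for $a=p+u$ with $pu=up$ the witness projection for condition $(7)$ is $1-p$). Nothing is missing.
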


The following example will reveal that the converse of Corollary
\ref{38} does not hold.

\begin{example}
Let $S=M_2(\mathbb{Z}_3)$. The involution of $S$ is defined by
$A\rightarrow A^*$, where $A^*$ is the transpose of $A\in S$. Then
$S$ is not strongly $*$-clean by \cite[Theorem 2.3]{ChenCui}. Since
$S$ is unit regular, ${\rm isr}(S)=1$ by \cite[Corollary
3.4]{Wang11}. In view of \cite[Lemma 7]{Chen}, we have
\begin{center}$Id(S)=\{O,I_2,\left(
\begin{smallmatrix}
x &y\\
z& 1-x
\end{smallmatrix} \right)~with~yz=x-x^2\}$,
\end{center} and
\begin{center}
$P(S)=\{O,I_2,\left(
\begin{smallmatrix}
1 &0 \\
0& 0
\end{smallmatrix} \right),\left(
\begin{smallmatrix}
0 &0 \\
0& 1
\end{smallmatrix} \right),\left(
\begin{smallmatrix}
2 &1 \\
1& 2
\end{smallmatrix} \right),\left(
\begin{smallmatrix}
2 &2 \\
2& 2
\end{smallmatrix} \right)\}$.
\end{center}
We next prove that ${\rm psr}(S)=1.$ Assume on the contrary. Then
there exist $A=\left(
\begin{smallmatrix}
a &b \\
c&d
\end{smallmatrix} \right)$ and $A^{\prime}=\left(
\begin{smallmatrix}
a^{\prime} &b^{\prime} \\
c^{\prime}& d^{\prime}
\end{smallmatrix} \right)$ with $AS+A^{\prime}S=S$ but
$A+A^{\prime}P$ is not a unit for any $P\in P(S).$ That is,
\begin{center}
${\rm det}(A+A^{\prime}P)=0.$
\end{center}
This implies the following system of equations$:$
\begin{align*}
ad-bc&=0~~~\indent\indent\indent\indent(i), \ \ & ad^{\prime}-b^{\prime}c&=0~~~\indent\indent\indent\indent(ii), \\
a^{\prime}d-bc^{\prime}&=0 ~~~\indent\indent\indent\indent(iii),\ \
&
a^{\prime}d^{\prime}-b^{\prime}c^{\prime}&=0~~~\indent\indent\indent\indent(iv),\\
ac^{\prime}-a^{\prime}c&=bd^{\prime}-b^{\prime}d~~~\indent ~~~(v).
\end{align*}
On the other hand, as ${\rm isr}(S)=1$, there exists $E\in
Id(S)\setminus P(S)$ such that $A+A^{\prime}E\in U(S)$. Then $E$
must be of the form $\left(
\begin{smallmatrix}
x &y\\
z& 1-x
\end{smallmatrix} \right)$ where $yz=x-x^2$. By Eqs. $(i)-(iv)$, we
obtain
\begin{center}
${\rm
det}(A+A^{\prime}E)=(ac^{\prime}-a^{\prime}c)y-(bd^{\prime}-b^{\prime}d)z.$
\end{center}
Next we show that
$ac^{\prime}-a^{\prime}c=bd^{\prime}-b^{\prime}d=0.$\\
\textbf{Case 1.} $c\neq 0.$ Multiplying Eq. (v) by $c$ and by
substituting $b^{\prime}c=ad^{\prime}$, we have
$(ac^{\prime}-a^{\prime}c)c=bd^{\prime}c-b^{\prime}dc=(bc-ad)d^{\prime}=0$
by Eq. (i).
Thus, $ac^{\prime}-a^{\prime}c=bd^{\prime}-b^{\prime}d=0.$\\
\textbf{Case 2.} $d\neq 0.$ Multiplying Eq. (v) by $d$ and by
substituting $a^{\prime}d=bc^{\prime}$, we have
$(bd^{\prime}-b^{\prime}d)d=ac^{\prime}d-a^{\prime}cd=(ad-bc)c^{\prime}=0$
by Eq. (i).
So $ac^{\prime}-a^{\prime}c=bd^{\prime}-b^{\prime}d=0.$\\
\textbf{Case 3.} $c=d=0.$ From Eq. $(ii)$ and $(iii)$, we get
$ad^{\prime}=bc^{\prime}=0.$ If $b\neq 0$, then $c^{\prime}=0$, it
follows that $ac^{\prime}-a^{\prime}c=0.$ If $a\neq 0$, then
$d^{\prime}=0,$ and so $bd^{\prime}-b^{\prime}d=0.$ Thus
$ac^{\prime}-a^{\prime}c=bd^{\prime}-b^{\prime}d=0.$

Therefore, ${\rm
det}(A+A^{\prime}E)=(ac^{\prime}-a^{\prime}c)y-(bd^{\prime}-b^{\prime}d)z=0$
for any case, which contradicts $A+A^{\prime}E\in U(S)$. Hence,
${\rm psr}(R)=1.$
\end{example}

By Theorem \ref{cui}, we have the following result immediately.

\begin{corollary}
Let $R$ be a $*$-ring. If $Id(R)=P(R),$ then the following are
equivalent$:$\\
$(1)$ $R$ is $($strongly$)$ clean.\indent\indent\indent$(2)$ $R$ is exchange.\\
$(3)$ $R$ is $($strongly$)$ $*$-clean. \indent\indent$(4)$ ${\rm
isr}(R)=1.$\indent~~~\indent\indent $(5)$ ${\rm psr}(R)=1.$
\end{corollary}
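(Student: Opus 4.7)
The plan is to deduce this corollary directly from Theorem \ref{cui}, once I have shown that the hypothesis $Id(R)=P(R)$ packages together both auxiliary conditions appearing there. First I would invoke Lemma \ref{20140}: since every idempotent is a projection, $R$ is abelian, and consequently every projection, being a (central) idempotent, is central, so $R$ is $*$-abelian. Thus both side conditions ``every idempotent of $R$ is a projection'' (used in Theorem \ref{cui}(3), (4)) and ``$R$ is $*$-abelian'' (used in Theorem \ref{cui}(1), (5)) are built into our assumption for free.

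Next I would note that in an abelian ring clean and strongly clean coincide, since any clean decomposition $a=e+u$ automatically has $eu=ue$ because $e$ is central; the analogous statement for $*$-clean versus strongly $*$-clean is likewise immediate. So the parentheticals ``(strongly)'' in (1) and (3) carry no extra content under the hypothesis. With these reductions in place, the equivalences pair off with Theorem \ref{cui} as follows: (1) is Theorem \ref{cui}(4) in its ``clean'' form, (2) is Theorem \ref{cui}(4) in its ``exchange'' form, (3) is Theorem \ref{cui}(5) (equivalently (6)), (4) is Theorem \ref{cui}(3), and (5) is Theorem \ref{cui}(1). Since all seven items of Theorem \ref{cui} are mutually equivalent, so are our five.

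No genuine obstacle arises; the only thing worth double-checking is that the single hypothesis $Id(R)=P(R)$ really does subsume both ``every idempotent is a projection'' and ``$*$-abelian'' simultaneously, and this is delivered by Lemma \ref{20140} in one line. The rest is a bookkeeping pass across the list of equivalent conditions already established in Theorem \ref{cui}, together with the standard abelian collapse of the strong versus non-strong variants of cleanness.
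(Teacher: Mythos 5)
Your proposal is correct and follows exactly the route the paper intends: the paper states this corollary follows immediately from Theorem \ref{cui}, and your argument simply spells out the bookkeeping, namely that $Id(R)=P(R)$ together with Lemma \ref{20140} yields both the ``every idempotent is a projection'' and the ``$*$-abelian'' side conditions, and that the abelian hypothesis collapses the strong and non-strong variants.
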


\bigskip

\centerline {\bf ACKNOWLEDGMENTS}
\vskip 2mm

This work was supported  by the
NNSF of China (No. 11326062, 11201064).

\end{document}